\newcommand\Pn{\mathcal{P}_n}
\newcommand\Qn{\mathcal{Q}_n}
\newcommand\Og{\mathcal{O}_{\mu;\nu}}
\newcommand\Ok{\mathbb{O}_{\mu;\nu}}
\newcommand\N{{\mathbb N}}
\newcommand\F{{\mathbb F}}
\DeclareMathOperator{\End}{End}
\DeclareMathOperator{\GL}{GL}
\DeclareMathOperator{\Sp}{Sp}
\DeclareMathOperator{\im}{im}
\newtheorem{thm}{Theorem}[section]
\newtheorem{prop}[thm]{Proposition}
\newtheorem{lem}[thm]{Lemma}
\newtheorem{cor}[thm]{Corollary}
\theoremstyle{remark}
\newtheorem{rem}[thm]{Remark}
\theoremstyle{definition}
\newtheorem{defn}[thm]{Definition}
\newtheorem{conj}{Notation}
\newtheorem{eg}[thm]{Example}
\address{\tiny Department of Mathematics, University of Oregon, Eugene, OR 97403-1222, USA}
\email{msun@uoregon.edu}
\author{Michael Sun}
\title{Point stabilisers for the enhanced and exotic nilpotent cones}
\begin{document}
\maketitle
\begin{abstract}
We give a semi-direct product decomposition of the point stabilisers for the enhanced and exotic nilpotent cones. In particular, we arrive at formulas for the number of points in each orbit over a finite field, which is in accordance with a recent conjecture of Achar and Henderson.
\end{abstract}
\section*{Introduction}

\indent In the theory of algebraic groups, there is much insight to be gained from studying the nilpotent cone $\mathcal{N}$ of an algebraic group $G$, which consists of the nilpotent elements in the Lie algebra of $G$. 

One of the main reasons for this is because the action of $G$ on $\mathcal{N}$ by conjugation produces an injective map (when $G$ is reductive) from the set of $G$-orbits $G\setminus\mathcal{N}$ to the irreducible representations of the Weyl group of $G$. This is a consequence of the Springer correspondence originally discovered by Springer \cite{springer1} in 1976 and explicitly described in all cases by Lusztig and Shoji by the early 1980s (see for example Shoji \cite{shoji}).

A well-known example of the Springer correspondence can be seen for the group $G=\GL(V)$ when $V$ is an $n$-dimensional vector space over an algebraically closed field $k$. This is a reductive group whose Lie algebra is $\End(V)$ and $\mathcal{N}$ its nilpotent elements. The $G$-orbits in $\mathcal{N}$ are in bijection with $\Pn$, the partitions of $n$, while the Weyl group of $G$ is just the Symmetric group $S_n$, whose irreducible representations are also parametrised by $\Pn$. In this case the correspondence is actually onto, which cannot be said for groups such as $K=\Sp(W)$, where $W$ is a $2n$-dimensional symplectic space over $k$.

To address this deficiency Kato \cite{kato}, in 2006, introduced what he called the \emph{exotic} nilpotent cone $W\times\mathfrak{N}_0$ where $\mathfrak{N}_0$ is the variety
$$\mathfrak{N}_0=\{y\in\End(W)\;|\;y\text{ is nilpotent and }\langle yv,v\rangle=0\text{ for all $v\in W$}\}$$
and $\langle\,,\,\rangle$ is the symplectic form on $W$.
He showed that the orbits in his cone obey some kind of \emph{exotic} Springer correspondence: 
$$K\setminus (W\times\mathfrak{N}_0)\longleftrightarrow\Qn$$
where $\mathcal{Q}_n$ denotes the bipartitions of $n$. 

A recent innovation in this direction is the introduction of the $\GL(V)$-set $V\times\mathcal{N}$, which has appeared in the work of Achar-Henderson \cite{AH}, Travkin \cite{trav} and others, and was given the name \emph{enhanced} nilpotent cone by Achar-Henderson in \cite{AH}. The enhanced cone mimics the cominatorics of the exotic cone while being a more accessible object.

For example, it was shown by Achar-Henderson \cite{AH} that there is a bijection $G\setminus (V\times\mathcal{N})\longleftrightarrow \Qn$ and that the closure ordering on these orbits agree with those on $K\setminus (W\times\mathfrak{N}_0)$ and are both given by a natural partial order on $\Qn$. These similarities are extended by a conjecture of Achar-Henderson \cite[Section 6]{AH}, which claims that the local intersection cohomology for the exotic nilpotent cone is the same as that of the enhanced nilpotent cone but with twice the degree. 

It is the immediate goal of this article to verify a prediction of the above conjecture (Corollary \ref{fini}). The main results are the explicit decompositions of the stabiliser groups for each orbit (Theorem \ref{big1} and Theorem \ref{big2}), from which the formulas for the number of $\F_q$ points in each orbit (Corollary \ref{fqpoints} and Corollary \ref{spfq}) follow. The treatment of the exotic cone (Section 3) is analogous to that of the enhanced cone (Section 2). Section 1 reviews the ordinary nilpotent cone of $\GL(V)$ and introduces notation.


\textbf{Acknowledgements}. This was the essence of my M.Sc thesis completed in September 2009 at the University of Sydney. I am greatly indebted to Anthony Henderson for his crucial comments.  
	
\section{The nilpotent cone}

A \emph{partition} is a non-increasing sequence of natural numbers $\lambda=(\lambda_1,\lambda_2,\dots)$ such that there exists a minimal $l(\lambda)\in\N$ with $\lambda_i=0$ for all $i>\l(\lambda)$. This $l$ is called the \emph{length} of $\lambda$. The \emph{weight} of $\lambda$ is defined to be $|\lambda|=\lambda_1+\lambda_2+\cdots+\lambda_l.$ If $|\lambda|=n$, we say that $\lambda$ is a partition of $n$. Denote by $\mathcal{P}$ the set of all partitions and by $\Pn$ the set of all partitions of $n$. Another standard notation for $\lambda\in\mathcal{P}$ is $\lambda=(i^{n_i})_{i\geq1}$ with $n_i=\#\{k\,|\,\lambda_k=i\}$. Let $n(\lambda)=\sum_{i\geq1}(i-1)\lambda_i$.
\begin{conj}For $\lambda\in\Pn$, write
$$\lambda=(l_h^{n_{l_h}})_{h\geq1}$$
where we impose $n_{l_h}>0$ and $l_h>l_{h+1}$. See Example \ref{note}. With this notation we define an accompanying indexing set $I_h=\{i\;|\;\lambda_i=l_h\}$ for each $h\geq1$.
\end{conj}
Now let $G=\GL(V)$, where $V$ is an $n$-dimensional vector space over an algebraically closed field $k$. Its (ordinary) \emph{nilpotent cone} is
$$\mathcal{N}=\{x\in\End(V)\;|\;x^n=0\}.$$ 
The $G$-orbits in $\mathcal{N}$ are parametrised by $\Pn$ since $G$ acts by conjugation.
\begin{defn}Let $x\in\mathcal{N}$. Call $\lambda\in\Pn$ of the reordered Jordan block sizes of $x$ the \emph{Jordan type} of $x$ and write $\mathcal{O}_{\lambda}$ for the orbit $Gx$.
\end{defn}
\begin{defn}Let $x\in\mathcal{N}$ have Jordan type $\lambda\in\Pn$. Call $\{v_{ij}|1\leq i\leq l(\lambda), 1\leq j\leq\lambda_i\}$	a \emph{Jordan basis} for $x$ if 
$$xv_{ij}=\begin{cases}0&\text{if $j=1$}\\
										v_{i,j-1}&\text{if $j>1$.}\end{cases}$$
\end{defn}
For $x\in\mathcal{N}$, fix a Jordan basis $\{v_{ij}\}$ and let $\End(V)^x=\{y\in\End(V)\;|\; yx=xy\}$, as well as $G^x=G\cap\End(V)^x$. 

\begin{conj}Let $y\in\End(V)$. Define $c_{ij}^{rs}(y)\in k$ by
$$yv_{ij}=\sum_{r,s}c_{ij}^{rs}(y)v_{rs}$$
\end{conj}
\begin{defn} For each $h\geq1$, let $V_h$ be the vector space given by the quotient
$$V_h=\frac{\ker x^{l_h}}{\ker x^{l_h-1}+\im x\cap\ker x^{l_h}}.$$
\end{defn}
Note that the image of $\{v_{i\lambda_i}\}_{i\in I_h}$ forms a basis for $V_h$ (so $\dim V_h=n_{l_h}$) and denote these cosets by $\{v_{i\lambda_i}'\}_{i\in I_h}$. $G^x$ acts on each $V_h$ to give 
$$\Psi_0: G^x\to\prod_{h\geq1}\GL(V_h)$$
which in explicit matrix terms is
$$g\mapsto \prod_{h\geq1}(c_{i\lambda_i}^{r\lambda_r}(g))_{i,r\in I_h}.$$
\begin{lem}\label{c}For $y\in\End(V)$, $y$ belongs to $\End(V)^x$ if and only if the following conditions hold
	\begin{itemize}
		\item[(i)] $c_{ij}^{rs}(y)=c_{i,j-m}^{r,s-m}(y)\text{ for }m<s\leq\lambda_r  \text{ and } m<j\leq\lambda_i$
		\item[(ii)] $c_{ij}^{rs}(y)=0\text{ if }s>j$
		\item[(iii)] $c_{ij}^{r\lambda_r}(y)=0\text{ if }j\neq\lambda_i$
	\end{itemize}
\end{lem}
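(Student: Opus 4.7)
The plan is to prove the lemma by direct coefficient comparison. I would fix $y \in \End(V)$ and compute the expansions of $xyv_{ij}$ and $yxv_{ij}$ in the Jordan basis, then equate them.

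Using $xv_{r1} = 0$ and $xv_{rs} = v_{r,s-1}$ for $s \geq 2$, one finds $xyv_{ij} = \sum_{r}\sum_{s=2}^{\lambda_r} c_{ij}^{rs}(y)\,v_{r,s-1}$. On the other hand, $yxv_{i1} = 0$ while for $j \geq 2$, $yxv_{ij} = \sum_{r,s} c_{i,j-1}^{rs}(y)\,v_{rs}$. Equating coefficients of each basis vector, the case $j = 1$ forces $c_{i1}^{rs}(y) = 0$ for all $s \geq 2$, while the case $j \geq 2$ yields the one-step recurrence $c_{ij}^{rs}(y) = c_{i,j-1}^{r,s-1}(y)$ valid whenever $s \geq 2$, together with the boundary identity $c_{i,j-1}^{r,\lambda_r}(y) = 0$ (which arises from the $s = \lambda_r$ term of $yx$ having no counterpart in $xy$).

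The boundary identity is exactly condition (iii) after reindexing $j-1$ as $j$, noting that this covers the range $1 \leq j < \lambda_i$. Iterating the one-step recurrence $m$ times gives condition (i), where the bounds $m < j$ and $m < s$ are precisely what is needed for validity at each intermediate step. For condition (ii), I would chain the recurrence from any $c_{ij}^{rs}(y)$ with $s > j$ down to $c_{i,1}^{r,\,s-j+1}(y)$ by taking $m = j-1$, and then invoke the base case $c_{i1}^{rt}(y) = 0$ for $t \geq 2$, using $s - j + 1 \geq 2$.

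The converse is straightforward: assuming (i)--(iii), a symmetric calculation shows that $xyv_{ij}$ and $yxv_{ij}$ agree term by term -- condition (ii) kills the $j = 1$ case, the shift relation (i) with $m = 1$ matches the bulk of the coefficients, and condition (iii) removes the stray $s = \lambda_r$ boundary contribution. I do not anticipate any serious obstacle; the only care required is in bookkeeping of index ranges, particularly the boundary cases $s = \lambda_r$ and $j = 1$, which dictate exactly when each of the three conditions is triggered.
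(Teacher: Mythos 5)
Your proof is correct, and it fills in exactly the direct coefficient comparison that the paper dismisses as ``clear.'' The bookkeeping checks out: the one-step recurrence from $j \geq 2$, together with the $j=1$ base case and the $s=\lambda_r$ boundary term, iterate precisely to conditions (i)--(iii), and the converse direction reverses this cleanly.
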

\begin{proof}This is clear. 
\end{proof}
\begin{lem}\label{invert}Let $y\in\End(V)^x$. Then
$$\det(y)=\prod_{h\geq1}\det((c_{i\lambda_i}^{r\lambda_r}(y))_{i,r\in I_h})^{l_h}.$$
In particular, $y\in G^x$ if and only if $(c_{i\lambda_i}^{r\lambda_r}(y))_{i,r\in I_h}\in\GL_{n_{l_h}}(k)$ for all $h\geq1$.
\end{lem}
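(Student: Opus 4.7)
The plan is to exploit the filtration $0 \subset \ker x \subset \ker x^2 \subset \cdots \subset \ker x^{l_1} = V$, each term of which is $y$-invariant since $y$ commutes with $x$. This factorises the determinant as $\det(y) = \prod_{j \geq 1} \det(y|_{W_j})$, where $W_j := \ker x^j / \ker x^{j-1}$. Lemma~\ref{c} then describes the induced action: a basis of $W_j$ is given by the classes of $\{v_{ij} : \lambda_i \geq j\}$, and in the expansion of $y v_{ij}$ the components $v_{rs}$ with $s \leq j-1$ die in the quotient while those with $s > j$ are already zero by (ii), leaving only $s = j$; condition (i) then identifies $c_{ij}^{rj}(y) = c_{i1}^{r1}(y)$. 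Grouping the $l_1$ levels by the constancy of $\{i : \lambda_i \geq j\} = I_1 \cup \cdots \cup I_h$ on the range $l_{h+1} < j \leq l_h$ (with $l_{h+1} := 0$ if $I_{h+1}$ is empty), I would arrive at
\[\det(y) = \prod_{h \geq 1} (\det B_h)^{l_h - l_{h+1}}, \qquad B_h := (c_{i1}^{r1}(y))_{i, r \in I_1 \cup \cdots \cup I_h}.\]

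The key structural claim is that $B_h$ is block upper triangular with respect to the ordered decomposition $I_1, \ldots, I_h$. For $i \in I_a$, $r \in I_b$ with $b > a$, so that $\lambda_r < \lambda_i$, applying (i) with shift $m = \lambda_r - 1$ gives $c_{i1}^{r1}(y) = c_{i\lambda_r}^{r\lambda_r}(y)$, which vanishes by (iii) since $\lambda_r \neq \lambda_i$. The diagonal blocks are the $M_a := (c_{i1}^{r1}(y))_{i, r \in I_a}$, which by a further use of (i) (with $m = l_a - 1$) agree with $(c_{i\lambda_i}^{r\lambda_r}(y))_{i, r \in I_a}$. Substituting $\det B_h = \prod_{a \leq h} \det M_a$ into the displayed product and telescoping $\sum_{h \geq a}(l_h - l_{h+1}) = l_a$ produces the stated product formula.

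The ``in particular'' assertion then follows at once: $y \in G^x$ iff $\det(y) \neq 0$, which by the product formula is equivalent to $\det M_h \neq 0$ for every $h$. The main obstacle is mostly bookkeeping --- ensuring the shift bounds $m < j, s$ in (i) are respected in each application, and that the rearrangement of the double product yields the exponent $l_h$ rather than something else --- but the essential structural input is the single observation that (i) combined with (iii) forces the off-diagonal blocks of each $B_h$ to vanish.
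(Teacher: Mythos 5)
Your argument is correct and complete, and it fills in exactly what the paper dismisses with ``This is clear.'' The three ingredients — the $y$-stable flag $\ker x \subset \ker x^2 \subset \cdots \subset \ker x^{l_1} = V$ (stable because $y$ commutes with $x$), the use of Lemma~\ref{c}(ii) and (i) to identify the induced map on each $\ker x^j/\ker x^{j-1}$ with the matrix $(c_{i1}^{r1}(y))_{\lambda_i,\lambda_r\ge j}$, and the use of (i) combined with (iii) to show $c_{i1}^{r1}(y)=0$ whenever $\lambda_i>\lambda_r$ — are precisely the observations the paper implicitly relies on, and your telescoping $\sum_{h\ge a}(l_h-l_{h+1})=l_a$ delivers the exponent $l_h$ as claimed. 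Two very minor notes: whether you call the resulting block structure ``upper'' or ``lower'' triangular depends on the row/column convention for $c_{ij}^{rs}$, but either way the determinant factors over the diagonal blocks; and your temporary notation $W_j$ collides with the $W_h$ the paper introduces in Section~3, so you'd want to rename it if this were to be inserted into the text.
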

\begin{proof}This is clear.
\end{proof}

\begin{prop}\label{centraliser}Let $x\in\mathcal{N}$ with Jordan type $\lambda$ and let $G^x$ be the centraliser of $x$. Then
$$G^x\cong U\rtimes\prod_{h\geq1}\GL(V_h)$$
where $U$ is unipotent and isomorphic to affine space of dimension
$$|\lambda|+2n(\lambda)-\sum_{h\geq1} n_{l_h}^2.$$
\end{prop}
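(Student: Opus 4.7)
The plan is to construct a section of $\Psi_0$, identify $U = \ker\Psi_0$ with its kernel, and verify its two claimed properties (unipotence and affine-space structure of the stated dimension) using the filtration $F^j = \ker x^j$ of $V$ and the parametrisation of $\End(V)^x$ provided by Lemma \ref{c}. A natural section $\sigma \colon \prod_h \GL(V_h) \to G^x$ is obtained by lifting each $g_h$ to an element that commutes with $x$ and whose top-level action (at $j = l_h$) copies $g_h$ under the identification $v_{i\lambda_i} \leftrightarrow v'_{i\lambda_i}$; a direct check (using Lemma \ref{invert} for invertibility) shows that $\sigma$ is a group homomorphism into $G^x$ with $\Psi_0 \circ \sigma = \mathrm{id}$, so $G^x \cong U \rtimes \prod_h \GL(V_h)$.

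To show $U$ is unipotent, I fix $u \in U$ and observe that $u$ preserves the filtration $F^j = \ker x^j$ since it commutes with $x$. On $F^j / F^{j-1}$, whose basis is the image of $\{v_{ij} : \lambda_i \geq j\}$, condition (ii) of Lemma \ref{c} reduces the action to $u v_{ij} \equiv \sum_{r : \lambda_r \geq j} c_{ij}^{rj}(u) v_{rj} \pmod{F^{j-1}}$. Using the shift identity (i) to transport $(j, j)$ either to $(\lambda_i, \lambda_i)$ (when $\lambda_r \geq \lambda_i$) or to $(\lambda_r, \lambda_r)$ (when $\lambda_r < \lambda_i$), condition (iii) kills the latter case while the former yields $c_{i\lambda_i}^{r\lambda_i}(u)$. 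Ordering the basis by grouping the blocks $I_h$ in decreasing order of $l_h$, the matrix of $u$ on $F^j / F^{j-1}$ becomes block upper triangular with identity diagonal blocks (since $\Psi_0(u) = I$), hence unipotent; therefore $u$ is unipotent on $V$.

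For the dimension and affine-space structure I will apply Lemma \ref{c} to parametrise $\End(V)^x$: for each ordered pair $(i, r)$, condition (i) reduces the entries $c_{ij}^{rs}$ to one scalar per shift-diagonal $d = s - j$, and (ii) together with (iii) leaves exactly $\min(\lambda_i, \lambda_r)$ free parameters. Summing gives $\dim \End(V)^x = \sum_{i,r} \min(\lambda_i, \lambda_r) = |\lambda| + 2n(\lambda)$. The defining equations $c_{i\lambda_i}^{r\lambda_r}(u) = \delta_{ir}$ (for $i, r$ in a common $I_h$) of $U$ are affine-linear conditions on $\sum_h n_{l_h}^2$ distinct free parameters, so $U$ is an affine subspace of $\End(V)^x$ of the asserted dimension; Lemma \ref{invert} then gives $\det u = 1$, so $U \subseteq G^x$. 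The main obstacle I anticipate is the unipotence step --- translating the shift identity (i) and the top-row vanishing (iii) into the block-upper-triangular picture on each graded piece; the semidirect decomposition and the dimension count are then routine bookkeeping.
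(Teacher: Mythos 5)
Your proposal is correct and follows the paper's approach — the paper's proof consists only of the sentence ``This follows easily since $\Psi_0$ splits,'' and your argument supplies exactly the details that sentence leaves implicit: the explicit block-diagonal section $\sigma$ of $\Psi_0$, the verification via the filtration $\ker x^j$ and conditions (i)--(iii) of Lemma~\ref{c} that $\ker\Psi_0$ consists of unipotent elements, and the dimension count $\dim\End(V)^x=\sum_{i,r}\min(\lambda_i,\lambda_r)=|\lambda|+2n(\lambda)$ followed by subtracting the $\sum_h n_{l_h}^2$ affine-linear constraints cutting out $U$. Each of these steps is sound, so this is a correct and complete fleshing-out of the same route rather than a different one.
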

\begin{proof}This follows easily since $\Psi_0$ splits.
\end{proof}

\section{Orbits in the enhanced nilpotent cone}
Continuing with the notation of Section $1$, define the \emph{enhanced nilpotent cone} of $G$ to be the $G$-set $V\times\mathcal{N}$, with $G$ action 
$$g(v,x)=(gv,gxg^{-1})\quad\text{for all $g\in G$ and $(v,x)\in V\times\mathcal{N}$}.$$
Write $G^{(v,x)}$ for the stabiliser of $(v,x)$. We give a parametrisation of these orbits following Achar-Henderson\cite{AH}.
\begin{defn}A \emph{bipartition} of $n$ is a pair of partitions $(\mu;\nu)$ such that $\mu+\nu\in\Pn$. Denote by $\Qn$ the set of all bipartitions of $n$. 
\end{defn}
\begin{rem}\label{rem1} Note that if we have $(\mu;\nu)\in\Qn$ with $\lambda=\mu+\nu$, then $\lambda_i=\lambda_{i+1}$ implies $\mu_i=\mu_{i+1}$ and $\nu_i=\nu_{i+1}$ since $\mu,\nu\in\mathcal{P}$.
\end{rem}
\begin{defn}For $(\mu;\nu)\in\Qn$ and $\lambda=\mu+\nu$, define
$$b(\mu;\nu)=|\nu|+2n(\mu)+2n(\nu)=|\lambda|+2n(\lambda)-|\mu|.$$
\end{defn}
\begin{prop}\label{basis} Let $(v,x)\in V\times\mathcal{N}$, with $\lambda$ the Jordan type of $x$. There exists a Jordan basis $\{v_{ij}\}$ of $V$ such that 
$$v=\sum_{i\geq1}v_{i\mu_i}$$
where $(\mu;\lambda-\mu)\in\Qn$. Moreover this gives a one to one correspondence 
$$G\setminus (V\times\mathcal{N})\longleftrightarrow \Qn$$
where $G(v,x)\leftrightarrow(\mu;\lambda-\mu)$. Denote this orbit $\mathcal{O}_{\mu;\nu}$ where $\nu=\lambda-\mu$.
\end{prop}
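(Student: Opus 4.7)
The plan is to prove Proposition \ref{basis} in two stages: existence of a representative of the stated form in every $G$-orbit, and uniqueness of the resulting bipartition.

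For existence, I fix an arbitrary Jordan basis $\{w_{ij}\}$ of $V$ for $x$ and expand $v = \sum c_{ij} w_{ij}$. By Lemma \ref{c}, an element of $\End(V)^x$ is specified by the images of the top vectors $w_{i,\lambda_i}$ subject only to the depth-upper-triangularity constraints, so $G^x$ contains a large supply of basis-changing operators. I modify the basis in two passes. Pass 1: process the Jordan blocks in decreasing order of size $l_h$; within each size class $I_h$, use the $\GL(V_h)$-component of $G^x$ coming from $\Psi_0$ to rescale and recombine the cyclic generators $w_{i,l_h}$, $i \in I_h$, so that each block of size $l_h$ that contributes to $v$ does so as a single basis vector $v_{i,\mu_i}$. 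Pass 2: apply unipotent elements of $G^x$ (the factor $U$ from Proposition \ref{centraliser}) to absorb the residual terms $c_{ij} w_{ij}$ coming from smaller blocks into the already-standardised larger ones. Permuting blocks of equal size orders the $\mu_i$ weakly decreasingly within each $I_h$, and Remark \ref{rem1} then forces $\nu = \lambda - \mu$ to be a partition, so $(\mu;\nu) \in \Qn$.

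For uniqueness, I would exhibit $G$-invariant numerical data that recover the bipartition. Natural candidates arise from the two $x$-equivariant filtrations $\{\ker x^l\}$ and $\{x^k V\}$ together with the cyclic submodule $k[x] \cdot v$: dimensions of subspaces built from these, such as
\[
\dim\!\left(\frac{k[x]\cdot v + \ker x^{l} \cap x^{k}V}{\ker x^{l}\cap x^{k}V + x\cdot k[x]\cdot v}\right),
\]
are manifestly $G$-invariant. A direct computation on the normal form $v = \sum v_{i,\mu_i}$ shows that a suitable combination of such invariants reads off the multiplicities $\#\{i : \mu_i = a,\ \lambda_i = b\}$ and therefore determines the bipartition $(\mu;\nu)$, which finishes the bijection.

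The main obstacle is the cross-block absorption step in Pass 2: one must eliminate residual contributions from smaller blocks without re-introducing contamination into the larger blocks standardised in earlier steps. The explicit description in Lemma \ref{c}, in particular condition (iii) restricting where top-of-block images may land, shows that unipotent elements of $G^x$ only transport residues from smaller to larger blocks, so a top-down iteration on $l_h$ terminates cleanly; however, this requires careful bookkeeping of which indices can interfere with which, and is really the technical heart of the proof.
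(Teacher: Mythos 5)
The paper does not give an independent proof of this proposition; it simply cites Achar--Henderson \cite[Proposition 2.3]{AH}. So your proposal is not a variant of the paper's argument but a from-scratch attempt, and it has real gaps.

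\textbf{Existence, Pass 1.} The subgroup $\prod_h\GL(V_h)$ of $G^x$ acts on the generators $\{w_{i,l_h}\}_{i\in I_h}$ by a matrix with entries in $k$, i.e.\ with coefficients \emph{constant} in $x$. If the contribution of the size-$l_h$ blocks to $v$ is $\sum_{i\in I_h} p_i(x)\,w_{i,l_h}$ with $p_i\in k[x]$, this group only transforms the tuple $(p_i)_i$ by a fixed element of $\GL_{n_{l_h}}(k)$ acting the same way in every $x$-degree. Unless the $p_i$ happen to be proportional, you cannot collapse this to ``a single basis vector $v_{i,\mu_i}$'' using $\GL(V_h)$ alone. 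To normalise within a block size you need the full automorphism group of that isotypic summand (essentially $\GL_{n_{l_h}}(k[x]/(x^{l_h}))$), whose unipotent radical is part of the $U$ in Proposition~\ref{centraliser}; so Pass~1 and Pass~2 cannot be decoupled the way you describe.

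\textbf{Existence, the $\Qn$ condition.} You assert that ``Remark~\ref{rem1} then forces $\nu=\lambda-\mu$ to be a partition,'' but Remark~\ref{rem1} runs in the opposite direction: it is a \emph{consequence} of $(\mu;\nu)\in\Qn$, not a device for establishing it. The genuinely delicate point of the existence argument is precisely to arrange simultaneously that $\mu$ is non-increasing \emph{and} $\lambda-\mu$ is non-increasing; permuting blocks only within a single $I_h$ orders the $\mu_i$ locally and says nothing across different sizes, and a naive block-by-block normalisation (e.g.\ peeling off the largest Jordan block and recursing) generically produces a $\mu$ or $\nu$ that fails to be a partition. Some choice must be made --- typically governed by the depth $\max\{a:v\in x^aV\}$ of $v$ and the orders of the blocks that realise it --- and your proposal does not make that choice or justify that it yields a bipartition. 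You flag the cross-block bookkeeping as ``the technical heart'' but do not supply it, and this is exactly where the proof lives.

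\textbf{Uniqueness.} The idea of producing $G$-invariants from the filtrations $\ker x^l$, $x^kV$ and the cyclic module $k[x]v$ is sound in spirit, but ``a suitable combination of such invariants reads off the multiplicities'' is an assertion, not an argument. You would need to exhibit a concrete family of invariants, evaluate them on the normal form, and show the map to $\Qn$ is injective. As written, neither half of the bijection is actually established; a reader who wants a complete proof should consult \cite[Proposition 2.3]{AH}, where the classification is carried out via the structure of $(V,v)$ as a $k[x]$-module with marked vector.
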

\begin{proof}See \cite[Proposition 2.3]{AH}.
\end{proof}



Now fix $(\mu;\nu)\in\Qn$ with $\lambda=\mu+\nu$ and let $(v,x)\in\Og$. Also fix a Jordan basis $\{v_{ij}\}$ so that $v=\sum_{i\geq1}v_{i\mu_i}$. We may change our $v$ to be any vector in the orbit $G^xv$ and still keep $x$ in Jordan form and $G^{(v,x)}$ isomorphic. Since $\prod_{h\geq1}\GL(V_h)$ is a subgroup of $G^x$, we may rewrite $v$ as
$$v=\sum_{i(h)\leq l(\mu)}v_{i(h)\mu_{i(h)}}$$
where $i(h)$ is the smallest $i$ in $I_h$ and define $v_h\in V_h$ by
$$v_h=v_{i(h)\lambda_{i(h)}}'.$$ 
\begin{conj}Let $(\mu;\nu)\in\Qn$ and $\mu+\nu=\lambda=(l_h^{n_{l_h}})_{h\geq1}$. Write
$$\begin{array}{cc}\mu=(j_h^{n_{l_h}})_{h\geq1},&\nu=(k_h^{n_{l_h}})_{h\geq1}\end{array}$$
with $j_h\geq j_{h+1}$ and $k_h\geq k_{h+1}$, which makes sense by Remark \ref{rem1}. Set $k_0=\infty$. See Example \ref{note}. Also define the indexing set
$$J=\{h\geq1\;|\;j_{h}>j_{h+1}\text{ and }k_h<k_{h-1}\}.$$
\end{conj}
\begin{lem}\label{stab}If $h\in J$, then $G^{(v,x)}v_h=v_h$.
\end{lem}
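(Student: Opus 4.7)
The plan is to unpack the stabiliser condition $gv = v$ coefficient-by-coefficient and show that it forces the matrix entries controlling $g \cdot v_h$ to be trivial. Since $g \in G^x$ acts on $V_h$ via $\Psi_0$, we have $g \cdot v_h = \sum_{r \in I_h} c_{i(h), l_h}^{r, l_h}(g)\, v_{r, l_h}'$, so the task reduces to showing $c_{i(h), l_h}^{r, l_h}(g) = \delta_{r, i(h)}$ for every $r \in I_h$. Fixing such an $r$, I would extract the coefficient of $v_{r, j_h}$ from both sides of $gv = v$. On the right, the disjointness of the sets $I_{h'}$ and the expansion $v = \sum_{h' : j_{h'} > 0} v_{i(h'), j_{h'}}$ give exactly $\delta_{r, i(h)}$, which is meaningful because $j_h \geq 1$ (following from $j_h > j_{h+1} \geq 0$). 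On the left we get $\sum_{h' : j_{h'} > 0} c_{i(h'), j_{h'}}^{r, j_h}(g)$, and I will show only the $h' = h$ term survives.

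For $h' > h$, the non-increasing sequence $j_{h'}$ satisfies $j_{h'} \leq j_{h+1} < j_h$, so Lemma \ref{c}(ii) directly gives $c_{i(h'), j_{h'}}^{r, j_h}(g) = 0$; this is where the condition $j_h > j_{h+1}$ from $h \in J$ is used. For $h' < h$, I would apply Lemma \ref{c}(i) with $m = k_h$ to shift the coefficient upward; the shift is valid since $k_h \leq k_{h-1} \leq k_{h'}$ by monotonicity of $k$ and the hypothesis $h \in J$. The shifted coefficient becomes $c_{i(h'), j_{h'} + k_h}^{r, l_h}(g)$. Now $l_h = \lambda_r$ and the strict inequality $j_{h'} + k_h < l_{h'} = \lambda_{i(h')}$ holds (it amounts to $k_h < k_{h'}$, which follows from $h \in J$ giving $k_h < k_{h-1} \leq k_{h'}$), so Lemma \ref{c}(iii) annihilates this term; here is where the second strict inequality defining $J$ is used.

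The surviving $h' = h$ term is $c_{i(h), j_h}^{r, j_h}(g)$, which by Lemma \ref{c}(i) with $m = k_h$ equals $c_{i(h), l_h}^{r, l_h}(g)$. Combining with the right-hand side computed above, we obtain $c_{i(h), l_h}^{r, l_h}(g) = \delta_{r, i(h)}$ for every $r \in I_h$, and hence $g \cdot v_h = v_h$ as required. I do not anticipate any serious obstacle: the proof is a tight piece of bookkeeping with Lemma \ref{c}, and the only real subtlety is recognising that the two strict inequalities defining $J$ correspond exactly to the two ranges $h' > h$ and $h' < h$ that must be eliminated, via parts (ii) and (iii) of Lemma \ref{c} respectively.
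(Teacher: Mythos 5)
Your proof is correct and supplies precisely the coefficient-by-coefficient bookkeeping that the paper dismisses with ``This is straightforward.'' Comparing coefficients of $v_{r,j_h}$ in $gv=v$ and eliminating the $h'\neq h$ terms via parts (ii), (i) and (iii) of Lemma \ref{c}, with the two strict inequalities defining $J$ used exactly where you say, is the natural argument the author has in mind.
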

\begin{proof}
This is straightforward.
\end{proof}
Now let $h\in J$ and $\partial_h:\GL(V_h)^{v_h}\to\GL_{n_{l_h}-1}(k)$ the map induced by the action of $\GL(V_h)^{v_h}$ on $V_h/kv_h$ with basis $\{v_{j\lambda_j}'+kv_h\}_{j\in I_h\setminus\{i(h)\}}$. 
\begin{defn}By Lemma \ref{stab} we can compose $\Psi_0$ from Section 1 with $\prod_{h\in J}\partial_h$:
$$\Psi: G^{(v,x)}\to\left(\prod_{h\in J}\GL_{n_{l_h}-1}(k)\right)\times\left(\prod_{h\notin J}\GL_{n_{l_h}}(k)\right)$$
In explicit matrix terms, the map is 
$$g\mapsto\prod_{h\in J}(c_{i\lambda_i}^{r\lambda_r}(g))_{i,r\in I_h\setminus\{i(h)\}}\prod_{h\notin J}(c_{i\lambda_i}^{r\lambda_r}(g))_{i,r\in I_h}.$$
\end{defn}
	
	
	
			
\begin{lem}\label{ker}$\ker\Psi$ is isomorphic to affine space of dimension
$$b(\mu;\nu)-\sum_{h\notin J}n_{l_h}^2-\sum_{h\in J}(n_{l_h}-1)^2.$$
\end{lem}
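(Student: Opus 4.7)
The plan is to realise $\ker\Psi$ as an affine subspace of the coefficient parametrisation of $G^x$ from Lemma~\ref{c}, then to count free parameters using the splitting $G^x = U \rtimes L$ of Proposition~\ref{centraliser}. First, I would fix the natural section $\sigma \colon L \to G^x$ of $\Psi_0$ and write any $g \in G^x$ uniquely as $g = u\,\sigma(l)$. The requirement $\Psi(g)=1$ forces $l_h = I$ for $h \notin J$ and constrains $l_h$ for $h \in J$ to $\ker\partial_h$, the affine subspace of $\GL(V_h)^{v_h}$ of shears fixing $v_h$, parametrised by the $\sum_{h \in J}(n_{l_h}-1)$ entries $c_{i,l_h}^{i(h),l_h}(g)$ with $i \in I_h \setminus \{i(h)\}$. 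A direct computation using $\sigma(l)v_{i(h),j_h} = \sum_{r \in I_h} l_{h,i(h),r}\,v_{r,j_h}$ shows $\sigma(l)v = v$ for every such $l$, since each $l_h$ fixes $v_h$. Consequently $gv=v$ reduces to $uv=v$, yielding
\[
\ker\Psi \;\cong\; U^v \times \mathbb{A}^{\sum_{h \in J}(n_{l_h}-1)}, \qquad U^v := \{u \in U : uv = v\}.
\]

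The remaining task is to show $U^v$ is an affine subspace of $U$ of dimension $\dim U - |\mu| + \sum_{h \in J} n_{l_h}$. By Lemma~\ref{c}(i), $c_{i(h),j_h}^{r,s}(u) = c_{i(h),l_h}^{r,s+k_h}(u)$ whenever $s \le j_h$ and $s+k_h \le \lambda_r$, vanishing otherwise, so the coefficient of $v_{r,s}$ in $uv$ is a linear form in the $c_{i(h),l_h}^{r,s+k_h}(u)$ summed over those $h$ with $j_h > 0$, $j_h \ge s$ and $s+k_h \le \lambda_r$ (call such $h$ valid for $(r,s)$). Each free $U$-parameter $c_{i(h),l_h}^{r,s'}(u)$ with $s' > k_h$ appears in exactly one such coefficient, namely that of $v_{r,s'-k_h}$; parameters with $s' \le k_h$, or whose first index is not of the form $i(h')$ for some $h'$ with $j_{h'} > 0$, do not appear at all. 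So the system $uv = v$ decouples across the coordinates of $V$, making $U^v$ an affine subspace of $U$ whose codimension is the number of basis vectors $v_{r,s}$ on which $uv - v$ is not identically zero as $u$ ranges over $U$.

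The key combinatorial step is to count those coordinates. Fix $r \in I_{h^*}$. The coefficient of $v_{r,s}$ in $uv-v$ is identically zero precisely in two cases: \textbf{(a)} no $h$ is valid for $(r,s)$, or \textbf{(b)} the only such $h$ is $h^*$ with $s = j_{h^*}$, whose sole contribution is the $L$-parameter $c_{i(h^*),l_{h^*}}^{r,l_{h^*}}(u) = \delta_{i(h^*),r}$. The monotonicities $j_h \searrow$ and $k_h \searrow$ collapse (a) to the single condition $s > j_{h^*} = \mu_r$, so (a) picks out exactly the $|\nu|$ boxes of the Young diagram of $\lambda$ lying outside $\mu$. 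Isolation of $h^*$ in the valid set at $s = j_{h^*}$ forbids both $k_{h^*-1} = k_{h^*}$ and $j_{h^*} = j_{h^*+1}$, which is precisely the defining condition $h^* \in J$; this contributes $\sum_{h \in J} n_{l_h}$ further coordinates. Therefore $\dim Uv = |\lambda| - |\nu| - \sum_{h \in J} n_{l_h} = |\mu| - \sum_{h \in J} n_{l_h}$. Substituting $\dim U = |\lambda| + 2n(\lambda) - \sum_h n_{l_h}^2$ and $b(\mu;\nu) = |\lambda| + 2n(\lambda) - |\mu|$ then gives the claimed dimension $b(\mu;\nu) - \sum_{h\notin J}n_{l_h}^2 - \sum_{h\in J}(n_{l_h}-1)^2$. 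The main obstacle is the case analysis (a)/(b) pinning down the rank of $uv=v$, in particular the monotonicity argument collapsing (a) to $s > \mu_r$ and the identification of (b) with the defining condition of $J$.
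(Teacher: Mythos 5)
Your proposal is correct and follows the same route as the paper's (very terse) proof: split $\ker\Psi$ via $G^x=U\rtimes\prod_h\GL(V_h)$ into $U^v\times\prod_{h\in J}\ker\partial_h$ and count free parameters minus independent linear relations from $uv=v$. Where the paper simply asserts that the $\sum_{h\in J}n_{l_h}(j_h-1)+\sum_{h\notin J}n_{l_h}j_h$ relations "independently combine," you supply the justification (the decoupling of the system and the case analysis identifying when a coordinate of $uv-v$ is identically zero), arriving at the equivalent count $|\mu|-\sum_{h\in J}n_{l_h}$.
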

\begin{proof}$\ker\Psi$ is defined by:\\
\begin{itemize}
\item$|\lambda|+2n(\lambda)-\sum_{i\geq1} n_i^2$ free variables from $\ker\Psi_0$ (Prop. \ref{centraliser}).\\
\item$\sum_{h\in J}(n_{l_h}-1)$ free variables from $\ker\prod_{h\in J}\partial_h$.\\
\item$\sum_{h\in J}n_{l_h}(j_h-1)+\sum_{h\notin J}n_{l_h}j_h$ linear relations from fixing $v$.\end{itemize}
These independently combine to give the result. See Example \ref{note}.
\end{proof}
We now provide a subgroup of $G^{(v,x)}$ isomorphic to $\im\Psi$. 
\begin{conj}For $t>0$ define
\begin{eqnarray*}R_t&=&\{h\;|\;j_h=j_{h+1}=\dots=j_{h+t}\neq j_{h+t+1}\}\\
L_t&=&\{h\;|\;k_h=k_{h-1}=\dots=k_{h-t}\neq k_{h-t-1}\}.\end{eqnarray*}
\end{conj}
\begin{defn}\label{H}Let $H$ be the subgroup of $G^{(v,x)}$ defined by the following relations. For each $r\in I_h$, we set $c_{i\lambda_i}^{rj}=0$ if $j\neq\lambda_r$ and the following
\begin{itemize}\item[]
\item\underline{$h\in R_t$, $j_h\neq0$.}
$$c_{i\lambda_i}^{r\lambda_r}=\begin{cases}\delta_{ri(h)}-c_{i(h)\lambda_{r}}^{r\lambda_r}&\text{if $i=i(h+t)$.}\\
												0&\text{otherwise, unless $i\in I_h$.}\end{cases}$$
\item\underline{$j_{h}\neq j_{h+1}$, $h\in L_t$.}
$$c_{i\lambda_i}^{r\lambda_r}=\begin{cases}\delta_{ri(h)}-c_{i(h)\lambda_r}^{r\lambda_r}&\text{if $i=i(h-t)$.}\\
												0&\text{otherwise, unless $i\in I_h$.}\end{cases}$$		
\item\underline{$h\in J$.}
$$c_{i\lambda_i}^{r\lambda_r}=0\text{ unless $i,r\in I_h\setminus\{i(h)\}$ or $i=r=i(h)$.}$$
\item\underline{$j_h=0$.}
$$c_{i\lambda_i}^{r\lambda_r}=0\text{ unless $i\in I_h$.}$$\end{itemize}
\end{defn}
\begin{prop}\label{sub}The restriction of $\Psi$ to $H$ gives 
$$H\cong\left(\prod_{h\in J}\GL_{n_{l_h}-1}(k)\right)\times\left(\prod_{h\notin J}\GL_{n_{l_h}}(k)\right).$$
\end{prop}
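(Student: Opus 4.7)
I plan to prove Proposition \ref{sub} by constructing an explicit two-sided inverse $s$ to $\Psi|_H$. Given a tuple $(A_h)$ in $\bigl(\prod_{h\in J}\GL_{n_{l_h}-1}(k)\bigr)\times\bigl(\prod_{h\notin J}\GL_{n_{l_h}}(k)\bigr)$, I would define $s((A_h))\in\End(V)^x$ by prescribing the coefficients $c_{i\lambda_i}^{r\lambda_r}$ with $i,r\in I_h$ (or $i,r\in I_h\setminus\{i(h)\}$ when $h\in J$, supplemented by $c_{i(h)\lambda_{i(h)}}^{i(h)\lambda_{i(h)}}=1$) to be the entries of $A_h$; every other $c_{i\lambda_i}^{r\lambda_r}$ is then forced by Definition \ref{H}, and the remaining coefficients $c_{ij}^{rs}$ are determined via Lemma \ref{c}.

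First I would verify $s((A_h))\in G^{(v,x)}$. Commutation with $x$ is built in through Lemma \ref{c}. Invertibility follows from Lemma \ref{invert}: each block $(c_{i\lambda_i}^{r\lambda_r})_{i,r\in I_h}$ is, modulo the deterministic correction entries and the extra diagonal entry introduced in the $h\in J$ case, just the invertible matrix $A_h$. The substantive check is that $s((A_h))$ fixes $v=\sum_{h:j_h>0}v_{i(h)\mu_{i(h)}}$; expanding $s((A_h))v$ and applying the shift invariance of Lemma \ref{c}(i) to rewrite each $c_{i(h),j_h}^{r,s}$ as $c_{i(h),\lambda_{i(h)}}^{r,s+k_h}$, one checks that the correction terms $\delta_{r,i(h)}-c_{i(h)\lambda_r}^{r\lambda_r}$ in the $R_t$ and $L_t$ cases are calibrated to cancel exactly the stray $\GL(V_h)$-action on the pivot vectors, the four cases of Definition \ref{H} together exhausting every comparison of consecutive $j_h$ and $k_h$ values.

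Next, $\Psi\circ s=\mathrm{id}$ is immediate from the choice of free parameters, giving surjectivity of $\Psi|_H$. Conversely, any $g\in H$ is determined by its values on the $I_h$ (respectively $I_h\setminus\{i(h)\}$) pivots: the relations of Definition \ref{H} express every other $c_{i\lambda_i}^{r\lambda_r}$ in terms of these, and Lemma \ref{c} then forces $g=s(\Psi(g))$. This simultaneously gives injectivity of $\Psi|_H$ and the identification $\im s=H$. To see $H$ is genuinely a subgroup, I would verify closure under multiplication by checking that the correction-term formulas in Definition \ref{H} are preserved when two elements of $H$ are composed inside $\End(V)^x$; this reduces to the same shift-invariance manipulation.

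The main obstacle will be the verification that $s((A_h))$ fixes $v$. This is the only place where the piecewise structure of Definition \ref{H} has to be reconciled against Lemma \ref{c}(i), and where one must see that the specific correction formula $\delta_{r,i(h)}-c_{i(h)\lambda_r}^{r\lambda_r}$ is forced by the requirement that the $\GL(V_h)$-action's displacement of $v_{i(h)\mu_{i(h)}}$ be compensated by the contributions from $i=i(h\pm t)$. Once the calculation is done cleanly in each case (including the boundary cases $j_h=0$ and $k_0=\infty$), the rest of the proof is essentially bookkeeping.
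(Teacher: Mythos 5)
Your plan is correct and follows essentially the same route as the paper: define $H$ by specifying the pivot coefficient matrices, check that the correction terms $\delta_{ri(h)}-c_{i(h)\lambda_r}^{r\lambda_r}$ keep $v$ fixed, and verify directly that $\Psi|_H$ is a bijective group homomorphism onto the stated product. The paper compresses all the bookkeeping you anticipate (invertibility via Lemma \ref{invert}, closure under multiplication, injectivity/surjectivity of $\Psi|_H$) with one device you did not mention: reordering the Jordan basis to $v_{11},\dots,v_{l(\lambda)1},v_{12},v_{22},\dots$, after which elements of $H$ become literally block diagonal and all of these claims are visible at a glance.
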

\begin{proof}The idea is to define elements of $H$ with as many zero entries as possible without changing the image of $\Psi$, while keeping just enough entries non-zero to accomodate the condition for stabilising $v$. For a clear way to see all the properties claimed, reorder the basis to be $v_{11},\dots,v_{l(\lambda)1},v_{12},v_{22},v_{32}$ etc. so that the elements in $H$ are essentially block diagonal. See Example \ref{note}. 
\end{proof}

\begin{thm}\label{big1}Suppose $(v,x)\in\Og$, then
$$G^{(v,x)}\cong U\rtimes\left(\left(\prod_{h\in J}\GL_{n_{l_h}-1}(k)\right)\times\left(\prod_{h\notin J}\GL_{n_{l_h}}(k)\right)\right)$$
where $U$ is unipotent and isomorphic to affine space of dimension
$$b(\mu;\nu)-\sum_{h\notin J} n_{l_h}^2-\sum_{h\in J}(n_{l_h}-1)^2.$$
\end{thm}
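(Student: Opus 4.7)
The strategy is to combine the preceding two results into the asserted semi-direct product. Observe that $\Psi$ fits into the short exact sequence
$$1 \longrightarrow \ker\Psi \longrightarrow G^{(v,x)} \stackrel{\Psi}{\longrightarrow} \im\Psi \longrightarrow 1.$$
Proposition \ref{sub} exhibits a subgroup $H\subseteq G^{(v,x)}$ on which $\Psi$ restricts to an isomorphism onto the target group $T := \left(\prod_{h\in J}\GL_{n_{l_h}-1}(k)\right)\times\left(\prod_{h\notin J}\GL_{n_{l_h}}(k)\right)$. This simultaneously shows that $\Psi$ is surjective and furnishes a splitting, so setting $U := \ker\Psi$ exhibits $G^{(v,x)}$ as the internal semi-direct product $U \rtimes H$. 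The dimension formula for $U$ as an affine variety is read off directly from Lemma \ref{ker}.

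The only point still requiring justification is that $U$ is \emph{unipotent} as an algebraic group, not merely affine as a variety. For this I would use that $\Psi$ factors as $\pi\circ \Psi_0|_{G^{(v,x)}}$, where $\pi$ applies $\partial_h$ on the factors with $h\in J$ (well-defined by Lemma \ref{stab}) and is the identity on the rest. This yields a short exact sequence
$$1 \longrightarrow \ker\Psi_0 \cap G^{(v,x)} \longrightarrow U \longrightarrow \Psi_0(U) \longrightarrow 1.$$
The term on the left is a closed subgroup of $\ker\Psi_0$, which is unipotent by Proposition \ref{centraliser}. The term on the right embeds into $\prod_{h\in J}\ker\partial_h$; each $\ker\partial_h$ consists of elements of $\GL(V_h)$ that fix $v_h$ and act as the identity on $V_h/kv_h$, and such elements are unitriangular in any basis extending $\{v_h\}$. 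Both ends of the sequence are therefore unipotent, hence so is $U$.

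I expect the unipotency check to be the only genuine obstacle; the semi-direct product structure and the dimension count are immediate packaging of Proposition \ref{sub} and Lemma \ref{ker}. If desired, one could alternatively verify unipotency pointwise by noting that for $u\in U$ the diagonal entries $c_{ij}^{ij}(u)$ in the Jordan basis are all equal to the corresponding top-of-block diagonal entries $c_{i\lambda_i}^{i\lambda_i}(u)$ via Lemma \ref{c}(i), and these are forced to be $1$ by the kernel conditions on $\Psi$ combined with $uv_h = v_h$.
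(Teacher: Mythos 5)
Your argument is the same as the paper's: the semi-direct product decomposition falls out of Proposition \ref{sub} (which gives the splitting) together with Lemma \ref{ker} (which gives the kernel and its dimension), and the paper's proof is literally just that one-line citation. Your additional verification that $U=\ker\Psi$ is genuinely unipotent, via the exact sequence trapping it between a subgroup of $\ker\Psi_0$ and a subgroup of $\prod_{h\in J}\ker\partial_h$, is a correct and worthwhile fleshing-out of a point the paper leaves implicit, but it does not change the route.
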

\begin{proof}
This follows from Lemma \ref{ker} and Proposition \ref{sub}.
\end{proof}
We will consider the following example to summarise our results. 
\begin{eg}\label{note}
Let $\lambda=(2,2,1,1)=(2^2,1^2)$ and $\mu=(1,1,1,1)=(1^2,1^2)$. Elements in $G^{(v,x)}$ can be written
$$\left(\begin{array}{cc|cc|c|c}
		a&t_1&b&t_5&t_9&t_{11}\\
		0&a&0&b&0&0\\\hline
		c&t_2&d&t_6&t_{10}&t_{12}\\
		0&c&0&d&0&0\\\hline
		0&t_3&0&t_7&x&y\\\hline
		0&t_4&0&t_8&z&w
		\end{array}\right), \begin{array}{rc}
a+t_9&=1\\c+t_{10}&=0\\x&=1\\z&=0\end{array}$$
where the relations are from stabilising $v=(1,0,0,0,1,0)^t$. 
Elements of $\ker\Psi$ have the form
$$\left(\begin{array}{cc|cc|c|c}
		1&t_1&0&t_5&t_9&t_{11}\\
		0&1&0&0&0&0\\\hline
		0&t_2&1&t_6&t_{10}&t_{12}\\
		0&0&0&1&0&0\\\hline
		0&t_3&0&t_7&1&y\\\hline
		0&t_4&0&t_8&0&1
		\end{array}\right), \begin{array}{c}t_9=0\\t_{10}=0.\end{array}$$
So $\ker\Psi\cong\mathbb{A}^{11}$ as varieties. Elements of $H$ have the form
$$\left(\begin{array}{cc|cc|c|c}
		a&0&b&0&1-a&0\\
		0&a&0&b&0&0\\\hline
		c&0&d&0&-c&0\\
		0&c&0&d&0&0\\\hline
		0&0&0&0&1&0\\\hline
		0&0&0&0&0&w
		\end{array}\right)\stackrel{\textbf{reorder basis}}{\longrightarrow}\left(\begin{array}{cc|c|c|cc}
		a&b&1-a&0&0&0\\
		c&d&-c&0&0&0\\\hline
		0&0&1&0&0&0\\\hline
		0&0&0&w&0&0\\\hline
		0&0&0&0&a&b\\
		0&0&0&0&c&d
		\end{array}\right).$$		
$$$$			
So $H\cong\GL_2\times\GL_{2-1}$ and $G^{(v,x)}\cong U\rtimes(\GL_2\times\GL_{2-1})$. where $U\cong\mathbb{A}^{11}$.
\end{eg}
Now suppose $\F_q$ is a finite subfield of $k$, and $(v,x)$ is an $\F_q$-rational point. The stabiliser $G^{(v,x)}$ is defined over $\F_q$ and we have:
\begin{cor}\label{size}$$|G^{(v,x)}(\F_q)|=q^{b(\mu;\nu)}\prod_{h\in J}\varphi_{n_{l_h}-1}(q^{-1})\prod_{h\notin J}\varphi_{n_{l_h}}(q^{-1})$$
where $\varphi_m(t)=\prod_{r=1}^{m}(1-t^r)$. 
\end{cor}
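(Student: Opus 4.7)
The corollary is essentially a direct point count using the semi-direct product decomposition already established in Theorem \ref{big1}, so the main task is to chase cardinalities rather than to prove new structural results.

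My plan is to first note that since $(v,x)$ is $\F_q$-rational, the stabiliser $G^{(v,x)}$ is a closed subgroup of $\GL(V)$ defined over $\F_q$, and the decomposition $G^{(v,x)} \cong U \rtimes L$ with $L = \bigl(\prod_{h\in J}\GL_{n_{l_h}-1}\bigr)\times\bigl(\prod_{h\notin J}\GL_{n_{l_h}}\bigr)$ from Theorem \ref{big1} descends to $\F_q$. Concretely, a Jordan basis for the $\F_q$-rational pair $(v,x)$ can be chosen over $\F_q$ (the Jordan type of a nilpotent $\F_q$-rational endomorphism is realised over $\F_q$), and then the explicit equations defining $U$, $H$, and $\Psi$ in Section 2 all have $\F_q$-coefficients. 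Since $U$ is connected unipotent, taking $\F_q$-points preserves the semi-direct product, giving $|G^{(v,x)}(\F_q)| = |U(\F_q)|\cdot|L(\F_q)|$.

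Next I would apply two standard $\F_q$-point counts. Because $U$ is isomorphic to affine space of dimension $d := b(\mu;\nu) - \sum_{h\notin J} n_{l_h}^2 - \sum_{h\in J}(n_{l_h}-1)^2$, we get $|U(\F_q)| = q^d$. For the Levi factor, I would use the classical identity $|\GL_m(\F_q)| = q^{m^2}\varphi_m(q^{-1})$, applied factorwise to $L$. Multiplying, the contribution $q^{\sum_{h\notin J} n_{l_h}^2 + \sum_{h\in J}(n_{l_h}-1)^2}$ coming from $|L(\F_q)|$ exactly cancels the negative terms hidden inside $d$, leaving the clean exponent $q^{b(\mu;\nu)}$ multiplied by the product of the $\varphi$'s.

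The only conceptual step is the rationality issue in the first paragraph; everything else is a bookkeeping calculation where the quadratic exponents cancel by construction. I expect this descent to $\F_q$ to be the subtlest point, but it follows routinely from the fact that the splitting $H \hookrightarrow G^{(v,x)}$ constructed in Proposition \ref{sub} is given by equations with coefficients in the prime field once the Jordan basis is chosen over $\F_q$. With that in hand, the corollary reduces to the identity
\[
q^{d} \cdot \prod_{h\in J} q^{(n_{l_h}-1)^2}\varphi_{n_{l_h}-1}(q^{-1}) \cdot \prod_{h\notin J} q^{n_{l_h}^2}\varphi_{n_{l_h}}(q^{-1}) = q^{b(\mu;\nu)} \prod_{h\in J}\varphi_{n_{l_h}-1}(q^{-1})\prod_{h\notin J}\varphi_{n_{l_h}}(q^{-1}),
\]
which is immediate from the definition of $d$.
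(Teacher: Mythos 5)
Your argument is correct and coincides with the paper's: both take the semi-direct product decomposition of Theorem \ref{big1}, count $|U(\F_q)| = q^d$ for the connected unipotent part and $|\GL_m(\F_q)| = q^{m^2}\varphi_m(q^{-1})$ for the Levi factors, and observe that the quadratic exponents cancel to leave $q^{b(\mu;\nu)}$. The paper dispatches this in one line; your added remarks on $\F_q$-rationality of the decomposition are sound but were left implicit there.
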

\begin{proof}As $|\GL_n(\F_q)|=q^{n^2}\varphi_n(q^{-1})$, the result is immediate.
\end{proof}

\begin{cor}\label{fqpoints} 
$$|\mathcal{O}_{\mu;\nu}(\F_q)|=\frac{q^{n^2-b(\mu;\nu)}\varphi_n(q^{-1})}{\prod_{h\in J}\varphi_{n_{l_h}-1}(q^{-1})\prod_{h\notin J}\varphi_{n_{l_h}}(q^{-1})}.$$
\end{cor}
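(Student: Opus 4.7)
The plan is to apply the orbit-stabiliser theorem over $\F_q$, combined with Lang--Steinberg to ensure the $\F_q$-rational points of $\Og$ form a single $G(\F_q)$-orbit. First I would observe that $G=\GL(V)$ is connected, and that by Theorem \ref{big1} the stabiliser $G^{(v,x)}$ is a semi-direct product of a unipotent group $U\cong\mathbb{A}^N$ with a product of general linear groups; since both factors are connected, so is $G^{(v,x)}$. Consequently, by Lang's theorem applied to the connected $G$-variety $\Og$ (whose stabiliser at an $\F_q$-rational point is connected), every $\F_q$-rational point of $\Og$ lies in the single $G(\F_q)$-orbit through $(v,x)$.

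With this in hand, the orbit--stabiliser formula gives
$$|\Og(\F_q)|=\frac{|G(\F_q)|}{|G^{(v,x)}(\F_q)|}.$$
The numerator is the standard count $|\GL_n(\F_q)|=q^{n^2}\varphi_n(q^{-1})$, and the denominator is computed in Corollary \ref{size}. Substituting and simplifying yields exactly
$$|\Og(\F_q)|=\frac{q^{n^2-b(\mu;\nu)}\varphi_n(q^{-1})}{\prod_{h\in J}\varphi_{n_{l_h}-1}(q^{-1})\prod_{h\notin J}\varphi_{n_{l_h}}(q^{-1})},$$
as required.

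There is essentially no obstacle beyond the connectedness check needed to invoke Lang's theorem; everything else is substitution into Corollary \ref{size}. Strictly speaking one should also note that $\Og$ is defined over $\F_q$ and that the chosen representative $(v,x)$ may be taken $\F_q$-rational (which follows because the classification of orbits in Proposition \ref{basis} is combinatorial and a Jordan basis can be chosen over $\F_q$ for an $\F_q$-rational nilpotent endomorphism in a prescribed Jordan type).
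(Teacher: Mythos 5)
Your argument is correct and matches the paper's: the paper likewise deduces connectedness of $G^{(v,x)}$ from Theorem \ref{big1}, concludes (via Lang's theorem) that $\Og(\F_q)$ is a single $G(\F_q)$-orbit, and divides $|\GL_n(\F_q)|$ by the stabiliser count from Corollary \ref{size}. Your additional remarks about $\F_q$-rationality of the representative are a reasonable elaboration of what the paper leaves implicit.
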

\begin{proof}The stabiliser $G^{(v,x)}$ is connected (a consequence of Theorem \ref{big1}) and so $\Og(\F_q)$ is a single orbit of $G(\F_q)$. 
\end{proof}
	
\section{Orbits in the exotic nilpotent cone}
Let $W$ be a $2n$-dimensional vector space with a symplectic form $\langle\;,\;\rangle$ over $k$ and let $\mathfrak{N}_0$ be the variety of self adjoint nilpotent endomorphisms of $W$ as defined in the introduction.
Define the \emph{exotic nilpotent cone} of $K=\Sp(W,\langle\;,\;\rangle)$ to be the variety $W\times\mathfrak{N}_0$, which has a $K$ action given by
$$g(w,y)=(gw,gyg^{-1})\quad\text{for all $g\in K$ and $(w,y)\in W\times\mathfrak{N}_0$}.$$
Denote by $K^{(w,y)}$ the stabiliser of $(w,y)$. We adapt the parametrisation of Lemma \ref{basis} to $K\setminus W\times\mathfrak{N}_0$ following Achar-Henderson \cite{AH}.

Identify $V$ with a subspace of $W$ by writing $W=V\oplus V^*$, where $V^*$ is the dual space of $V$. The symplectic form $\langle\;,\;\rangle$ is given by
$$\langle(v,f),(v',f')\rangle=f'(v)-f(v')\quad\text{ for all $(v,f),(v',f')\in V\oplus V^*$.}$$
From here we can identify $\mathcal{N}$ with $\{(x,x^t)\,|\,x\in\mathcal{N}\}\subset\mathfrak{N}_0$ and $\GL(V)$ with $\{g\in K\,|\,gV=V$ and $gV^*=V^*\}\subset K$ to give
$$V\times\mathcal{N}\subset W\times\mathfrak{N}_0\subset W\times\mathcal{N}(W)$$
where $\mathcal{N}(W)$ are the nilpotent endomorphisms of $W$.
\begin{thm}\label{oo}There is a one to one correspondence 
$$K\setminus W\times\mathfrak{N}_0\leftrightarrow\Qn$$ and write $\mathbb{O}_{\mu;\nu}$ for the orbit corresponding to $(\mu;\nu)\in\Qn$. Furthermore, under the identification given above, this parametrisation satisfies
$$\Og\subset\Ok\subset\mathcal{O}_{\mu\cup\mu;\nu\cup\nu}.$$
\end{thm}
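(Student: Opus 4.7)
My plan is to read the bijection $K\setminus W\times\mathfrak{N}_0\leftrightarrow\Qn$ directly off Kato's main theorem in \cite{kato}, and then to verify the two inclusions by tracking orbit representatives through the chain of subgroup inclusions $G\subset K\subset\GL(W)$ furnished by the identification $W=V\oplus V^*$. At the level of orbits both inclusions are automatic; the content is to check that Kato's labelling of $\Ok$ matches Proposition~\ref{basis} for $G$ on the inside and yields the doubled bipartition for $\GL(W)$ on the outside.

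For $\Og\subset\Ok$, I would start with the representative $(v,x)\in\Og$ supplied by Proposition~\ref{basis}, with Jordan basis $\{v_{ij}\}$ of $V$ and $v=\sum_{i\geq1}v_{i\mu_i}$. Complementing $\{v_{ij}\}$ with the dual basis $\{v_{ij}^*\}$ gives a symplectic basis of $W$; in this basis I would verify that the element $((v,0),(x,x^t))$ is in Kato's symplectic normal form for $\Ok$. Combined with the containment $G\subset K$ this yields $\Og\subset\Ok$.

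For $\Ok\subset\mathcal{O}_{\mu\cup\mu;\nu\cup\nu}$, the representative $((v,0),(x,x^t))$ from the previous step lies in $\Ok$, so it suffices to identify its $\GL(W)$-orbit in $W\times\mathcal{N}(W)$ via Proposition~\ref{basis}. The Jordan type of $(x,x^t)$ on $W$ is $\lambda\cup\lambda$; letting $\{u_{ij}\}$ denote the Jordan basis of $V^*$ with $x^t u_{ij}=u_{i,j-1}$, the naive basis $\{v_{ij}\}\cup\{u_{ij}\}$ does not place $(v,0)$ in the form required by Proposition~\ref{basis}. I would therefore pass to the modified Jordan basis $\tilde v_{ij}=v_{ij}+u_{ij}$, $\hat u_{ij}=-u_{ij}$, which is still Jordan for $(x,x^t)$, and observe that
\[
(v,0)=\sum_{i\geq1}\tilde v_{i\mu_i}+\sum_{i\geq1}\hat u_{i\mu_i}.
\]
Reading off the indices $(\mu_1,\mu_1,\mu_2,\mu_2,\dots)=\mu\cup\mu$ along the Jordan blocks of sizes $(\lambda_1,\lambda_1,\lambda_2,\lambda_2,\dots)=\lambda\cup\lambda$, Proposition~\ref{basis} identifies the $\GL(W)$-orbit as $\mathcal{O}_{\mu\cup\mu;\nu\cup\nu}$.

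The main obstacle will be the bookkeeping in the second step: unpacking Kato's symplectic normal form from \cite{kato} (or its restatement in \cite{AH}) and matching it against Proposition~\ref{basis} applied to $V$. Once that compatibility is in hand, the outer inclusion reduces to the short basis change above.
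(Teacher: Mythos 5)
The paper's proof of Theorem~\ref{oo} is a pure citation: both the bijection and the two inclusions are referred to Kato~\cite{kato} (and \cite{kato2} in characteristic $2$) and to the restatement in \cite[Theorem~6.1]{AH}. You instead propose to unpack the bijection and verify the inclusions explicitly, which is a genuinely more self-contained route. Your outer-inclusion computation is correct and worth recording: $(x,x^t)$ has Jordan type $\lambda\cup\lambda$ on $W=V\oplus V^*$; with $u_{ij}=v_{i,\lambda_i-j+1}^*$ the reindexed dual basis (so that $x^t u_{ij}=u_{i,j-1}$), the change $\tilde v_{ij}=v_{ij}+u_{ij}$, $\hat u_{ij}=-u_{ij}$ is again a Jordan basis for $(x,x^t)$; and since $v_{ij}=\tilde v_{ij}+\hat u_{ij}$, the vector $(v,0)=\sum_i v_{i\mu_i}$ becomes $\sum_i(\tilde v_{i\mu_i}+\hat u_{i\mu_i})$. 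Ordering the blocks $\tilde v_1,\hat u_1,\tilde v_2,\hat u_2,\dots$, Proposition~\ref{basis} applied to $\GL(W)$ reads off the bipartition $(\mu\cup\mu;\nu\cup\nu)\in\mathcal{Q}_{2n}$, and because $K$-orbits sit inside $\GL(W)$-orbits this gives $\Ok\subset\mathcal{O}_{\mu\cup\mu;\nu\cup\nu}$; likewise $\Og\subset\Ok$ follows from $G\subset K$ once the $K$-orbit label of the representative is known. That last point is the one step you (like the paper) leave entirely to Kato, and it is the crux: without actually matching $((v,0),(x,x^t))$ against Kato's normal form, your argument only shows the two inclusions hold for \emph{some} labelling of the $K$-orbits by $\Qn$, not necessarily the one in use. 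What your approach buys is an explicit Jordan-basis identity and more of the theorem made visible on the page; the paper's bare citation buys brevity and avoids reconciling normalisation conventions across \cite{kato} and \cite{kato2}.
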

\begin{proof}Originally proved by Kato (\cite{kato} and for characteristic $2$ in \cite[Corollary 4.3]{kato2}) and was restated as above by Achar-Henderson \cite[Theorem 6.1]{AH}.
\end{proof}

Let $(\mu;\nu)\in\Qn$ with $\lambda=\mu+\nu$, $(v,x)\in\mathcal{O}_{\mu;\nu}$, $\{v_{ij}\}$ a basis  as in Lemma \ref{basis} and write $(v,x)$ in the same form as in Section 2.3. Now by Theorem \ref{oo} we can write $$(w,y)=((v,0),(x,x^t))\in\mathbb{O}_{\mu;\nu}.$$
Letting $\{v_{ij}^*\}$ denote the dual basis of $\{v_{ij}\}$ for $V^*$, we define a basis $\{w_{ij}\}$ for $W=V\oplus V^*$ by
$$w_{ij}=\begin{cases}v_{ij}&\text{if $i\leq l(\lambda)$}\\
											v_{\bar{i},\lambda_{i}-j+1}^*&\text{otherwise}\end{cases}$$
where we define $\bar{i}=2l(\lambda)-i+1$ for $i\geq0$, and $\lambda_i=\lambda_{\bar{i}}$, $\mu_i=\mu_{\bar{i}}$ for $i>l(\lambda)$. 
\begin{conj}Redefine $I_h$ to be $\{i\;|\;\lambda_i=l_h\}$ in this expanded definition of $\lambda_i$. $l(\lambda)$ and $l(\mu)$ will still refer to the lengths of the original partitions $\lambda$ and $\mu$. Define $b_{ij}^{rs}(z)\in k$ by 
$$zw_{ij}=\sum_{r,s}b_{ij}^{rs}(z)w_{rs}\quad\text{for all $z\in\End(W)$.}$$
For each $h\geq1$ let $j(h)$ be the largest index in $I_h$. That is, $j(h)=\overline{i(h)}$.
\end{conj}We may now write
$$w=\sum_{i(h)\leq l(\mu)}w_{i(h)\mu_{i(h)}}.$$
With respect to $\{w_{ij}\}$, $y$ is in Jordan form with block sizes symmetric in the off-diagonal. Lemmas \ref{c} and \ref{invert} still apply.
\begin{lem}\label{spinvert}Let $z\in K^y$. Then $(b_{i\lambda_i}^{r\lambda_r}(z))_{i,r\in I_h}\in\Sp_{2n_{l_h}}(k)$ for all $h\geq1$
\end{lem}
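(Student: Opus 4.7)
The plan is to identify the matrix $(b_{i\lambda_i}^{r\lambda_r}(z))_{i,r \in I_h}$ with the induced action of $z$ on an appropriate quotient space that carries a natural symplectic form preserved by $K^y$. I would introduce the analog of $V_h$,
$$W_h = \frac{\ker y^{l_h}}{\ker y^{l_h - 1} + \im y \cap \ker y^{l_h}},$$
on which $K^y$ acts, with $\dim W_h = 2n_{l_h}$ since $I_h$ now includes barred indices. The basis $\{w_{i\lambda_i}'\}_{i \in I_h}$ of $W_h$ is chosen so that $(b_{i\lambda_i}^{r\lambda_r}(z))_{i,r \in I_h}$ is precisely the matrix of $z|_{W_h}$; this follows from conditions (ii) and (iii) of Lemma \ref{c}, since the only summands of $zw_{i\lambda_i}$ that survive in $W_h$ are those $b_{i\lambda_i}^{r\lambda_r}(z)\,w_{r\lambda_r}$ with $r \in I_h$.

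The correct form on $W_h$ is
$$\omega_h(\overline{w}, \overline{w'}) = \langle w, y^{l_h - 1} w'\rangle.$$
Polarizing the defining condition $\langle yv, v\rangle = 0$ of $\mathfrak{N}_0$ gives the self-adjointness $\langle yu, u'\rangle = \langle u, yu'\rangle$, from which well-definedness of $\omega_h$ on the quotient and its antisymmetry each follow in one line. A short case check on the basis, using $w_{i\lambda_i} = v_{i\lambda_i}$ for $i \leq l(\lambda)$ and $w_{i\lambda_i} = v_{\bar i, 1}^*$ for $i > l(\lambda)$, yields $\omega_h(w_{i\lambda_i}', w_{r\lambda_r}') = \pm \delta_{\bar i, r}$, so $\omega_h$ is nondegenerate.

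For $z \in K^y$, combining $zy = yz$ with the symplectic condition $\langle zu, zu'\rangle = \langle u, u'\rangle$ gives
$$\omega_h(\overline{zw_{i\lambda_i}}, \overline{zw_{r\lambda_r}}) = \langle zw_{i\lambda_i}, zy^{l_h - 1}w_{r\lambda_r}\rangle = \langle w_{i\lambda_i}, y^{l_h - 1}w_{r\lambda_r}\rangle = \omega_h(w_{i\lambda_i}', w_{r\lambda_r}').$$
Hence $(b_{i\lambda_i}^{r\lambda_r}(z))_{i,r \in I_h}$ preserves $\omega_h$ and therefore lies in $\Sp_{2n_{l_h}}(k)$.

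The hardest part will be spotting the right form: the naive $\langle w, w'\rangle$ vanishes identically on the span of the top Jordan layer $\{w_{i\lambda_i}\}_{i \in I_h}$ whenever $l_h > 1$, so inserting $y^{l_h - 1}$ to slide down to the kernel layer (where the pairing between $V$ and $V^*$ is nondegenerate) is essential. Once that is in hand, all remaining verifications reduce to self-adjointness of $y$ and the symplectic invariance of $z$.
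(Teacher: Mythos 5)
Your proposal is correct and fills in the details that the paper leaves as ``This is straightforward'': you use the same quotient $W_h$ that the paper introduces a few lines later (and the symplectic form on $W_h$ that the paper then invokes when it says $\widetilde\Psi_0$ ``preserves the symplectic form on $W_h$ by Lemma \ref{spinvert}''). The well-definedness of $\omega_h$, its skew-symmetry via the polarized self-adjointness $\langle yu,u'\rangle=\langle u,yu'\rangle$, the nondegeneracy computation $\omega_h(w_{i\lambda_i}',w_{r\lambda_r}')=\pm\delta_{\bar i,r}$, and the invariance under $z\in K^y$ all check out, so this is exactly the intended argument.
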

\begin{proof}This is straightforward.
\end{proof}
Define $W_h$ similarly to $V_h$ by
$$W_h=\frac{\ker y^{l_h}}{\ker y^{l_h-1}+\im y\cap\ker y^{l_h}}$$
and note that the cosets $\{w_{i\lambda_i}'\}_{i\in I_h}$ with representatives $\{w_{i\lambda_i}\}_{i\in I_h}$ form a basis for $W_h$.
Let $w_h$ be defined by
$$w_h=w_{i(h)\lambda_{i(h)}}'.$$
For each $h>0$, $K^{(w,y)}$ acts on $W_h$ to give a map $\widetilde{\Psi}_0: K^{(w,y)}\to\GL(W_h)$, which preserves the symplectic form on $W_h$ by Lemma \ref{spinvert}.
\begin{lem}\label{spstab}If $h\in J$ then $K^{(w,y)}w_h=w_h$.
\end{lem}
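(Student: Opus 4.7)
The plan is to argue by close analogy with Lemma \ref{stab}. The crucial observation is that on the extended Jordan basis $\{w_{ij}\}$ of $W$, the endomorphism $y$ acts by the same Jordan rule $y w_{ij} = w_{i,j-1}$ that $x$ does on $\{v_{ij}\}$. Consequently Lemma \ref{c} applies verbatim with $x, v, c$ replaced by $y, w, b$, and $W_h$ has basis $\{w_{r,l_h}'\}_{r \in I_h}$ under the expanded indexing $I_h = \{i \mid \lambda_i = l_h\}$. The symplectic structure is never used for this step.

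First I would show that the image of $gw_{i(h), l_h}$ in $W_h$ depends only on the top coefficients $b^{r,l_h}_{i(h),l_h}(g)$ for $r \in I_h$. Expanding $gw_{i(h),l_h} = \sum_{r,s} b^{rs}_{i(h),l_h}(g) w_{rs}$ and reducing modulo $\ker y^{l_h-1} + \im y \cap \ker y^{l_h}$, every term with $s < l_h$ or $\lambda_r \neq l_h$ dies, leaving
$$g w_h \;=\; \sum_{r \in I_h} b^{r,l_h}_{i(h),l_h}(g)\, w_{r,l_h}' \qquad \text{in } W_h.$$
Thus it suffices to prove $b^{r,l_h}_{i(h),l_h}(g) = \delta_{r,i(h)}$ for every $r \in I_h$.

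Next I would extract these coefficients from the equation $gw = w$. Comparing the coefficient of $w_{r,j_h}$ on both sides, the right hand side equals $\delta_{r,i(h)}$ when $r \leq l(\lambda)$ and $0$ when $r > l(\lambda)$, since every basis vector in $w$ has first index $i(h') \leq l(\lambda)$. The left hand side is $\sum_{h':\, j_{h'} \geq 1} b^{r,j_h}_{i(h'),j_{h'}}(g)$, which I would analyse by pushing $(s,j) = (j_h, j_{h'})$ upward via Lemma \ref{c}(i) until one of $s, j$ reaches its maximum, and then applying (ii) or (iii). This shows each term vanishes unless either $h' \leq h$ with $k_{h'} = k_h$ (Case~1, top-in-$r$, killed otherwise by (iii)) or $h' > h$ with $j_{h'} = j_h$ (Case~2, top-in-$i$, killed otherwise by (ii)). The hypothesis $h \in J$ together with the monotonicity $k_{h-1} \geq k_h$ and $j_h \geq j_{h+1}$ excludes every $h' \neq h$, so the sum collapses to $b^{r,j_h}_{i(h),j_h}(g)$, which by one further application of (i) equals $b^{r,l_h}_{i(h),l_h}(g)$, as required.

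The only feature not already present in Lemma \ref{stab} is the possibility that $r > l(\lambda)$ in the expanded $I_h$; but there the right hand side vanishes, and the same commutation argument forces the left hand side to vanish too, so nothing extra is needed. The main bookkeeping hurdle is simply keeping the case split between Cases~1 and~2 in the index-shifting step straight; once that is done, the proof is formally identical to that of Lemma \ref{stab}.
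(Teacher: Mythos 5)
Your proof is correct, and it fills a real gap: the paper's proof of Lemma~\ref{spstab} is simply ``identical to the proof of Lemma~\ref{stab},'' and Lemma~\ref{stab} itself is dismissed as ``straightforward,'' so no argument is actually written down. The two observations you lead with are exactly the ones that make the reduction to Lemma~\ref{stab} go through: (a) since $x^t v_{\bar\imath,k}^* = v_{\bar\imath,k+1}^*$ for $k<\lambda_{\bar\imath}$ and $0$ otherwise, the basis $\{w_{ij}\}$ is a genuine Jordan basis for $y$ (so Lemma~\ref{c} applies verbatim with $b,y,w$ in place of $c,x,v$), and (b) the symplectic form plays no role here because only the centralizer condition $yg=gy$ and the stabilizing condition $gw=w$ enter. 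Your coefficient-comparison and index-shifting argument is sound: reducing $gw_{i(h),l_h}$ in $W_h$ leaves only the $s=l_h$, $r\in I_h$ terms; comparing coefficients of $w_{r,j_h}$ in $gw=w$ yields $\sum_{h':\,j_{h'}\geq1} b^{r,j_h}_{i(h'),j_{h'}}(g) = \delta_{r,i(h)}$ for $r\in I_h$; and pushing each term up by $\min(k_h,k_{h'})$ via Lemma~\ref{c}(i), then applying (iii) when $h'<h$ (using $k_{h'}\geq k_{h-1}>k_h$) or (ii) when $h'>h$ (using $j_{h'}\leq j_{h+1}<j_h$), kills every $h'\neq h$ precisely because $h\in J$. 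The $r>l(\lambda)$ cases (including $r=j(h)$) are correctly handled by noting the right-hand coefficient is zero there and $\delta_{r,i(h)}=0$ as well. In short: correct, and genuinely more informative than what the paper provides.
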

\begin{proof}This is identical to the proof of Lemma \ref{stab}. 
\end{proof}
Now let $h\in J$ and $\widetilde{\partial}_h:\Sp(W_h)^{w_h}\to\Sp_{2n_{l_h}-2}(k)$ the map induced by the action of $\Sp(W_h)^{w_h}$ on $w_h^{\bot}/kw_h$ with basis $\{w_{j\lambda_j}'+kw_h\}_{j\in I_h\setminus\{i(h),j(h)\}}$. 
\begin{defn}Compose $\widetilde{\Psi}_0$ with $\prod_{h\in J}\widetilde{\partial}_h$ to give
$$\widetilde{\Psi}:K^{(w,y)}\to\left(\prod_{h\in J}\Sp_{2n_{l_h}-2}(k)\right)\times\left(\prod_{h\notin J}\Sp_{2n_{l_h}}(k)\right).$$
In explicit matrix terms, 
$$g\mapsto\prod_{h\in J}(b_{i\lambda_i}^{r\lambda_r}(g))_{i,r\in I_h\setminus\{i(h),j(h)\}}\prod_{h\notin J}(b_{i\lambda_i}^{r\lambda_r}(g))_{i,r\in I_h}.$$
\end{defn}
\begin{lem}\label{spker}$\ker\widetilde{\Psi}$ is isomorphic to affine space of dimension
$$n+2b(\mu;\nu)-\sum_{h\in J}[2(n_{l_h}-1)^2+(n_{l_h}-1)]-\sum_{h\notin J}(2n_{l_h}^2+n_{l_h})$$
\end{lem}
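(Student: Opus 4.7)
The plan is to mirror the three-part dimension count used in the proof of Lemma \ref{ker}, now adapted to the symplectic setting. I present $\ker\widetilde\Psi$ as a subvariety of the symplectic centraliser $K^y$ cut out by (i) the conditions that the components of $\widetilde\Psi_0$ be trivial for $h\notin J$ and lie in $\ker\widetilde\partial_h$ for $h\in J$, together with (ii) the stabilisation $gw=w$. The dimension is then the total number of free parameters from (i) minus the number of independent linear relations imposed by (ii).

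First I would establish a symplectic analog of Proposition \ref{centraliser} giving a decomposition $K^y\cong U'\rtimes\prod_{h\geq 1}\Sp(W_h)$ with $U'$ affine, whose dimension is read off directly from the constraints on the $b_{ij}^{rs}(g)$ coefficients via Lemmas \ref{c}, \ref{invert}, and \ref{spinvert}. This provides $\dim U'$ free variables for $\ker\widetilde\Psi_0$. Second, for each $h\in J$, since $\Sp(W_h)$ acts transitively on $W_h\setminus\{0\}$ one has $\dim\Sp(W_h)^{w_h}=\dim\Sp_{2n_{l_h}}-2n_{l_h}$, and $\widetilde\partial_h$ is surjective, so $\dim\ker\widetilde\partial_h=2n_{l_h}-1$; summing over $J$ contributes $\sum_{h\in J}(2n_{l_h}-1)$ further free variables.

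Third, I would expand $gw=\sum_h g w_{i(h),j_h}$ in the basis $\{w_{rs}\}$, using the sliding rules of Lemma \ref{c}, and count the resulting linear conditions on the $b$-coefficients, taking care to exclude those already enforced by $g\in\ker\prod_{h\in J}\widetilde\partial_h$ (elements of which automatically fix $w_h$ in $W_h$ for each $h\in J$ by Lemma \ref{spstab}). The three contributions should then telescope to the dimension claimed in the lemma.

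The principal obstacle is this third step: verifying that the constraints from $gw=w$ are genuinely independent of (and complementary to) those defining $U'\times\prod_{h\in J}\ker\widetilde\partial_h$. The bookkeeping is more delicate than in Lemma \ref{ker} because the basis $\{w_{ij}\}$ of $W=V\oplus V^*$ carries twice as many indices, and the symplectic relations couple the top and bottom halves; this can be made transparent by a block-diagonal reordering of the basis in the spirit of Example \ref{note}.
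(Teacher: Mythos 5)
Your three-part count mirrors the structure of the paper's proof of Lemma~\ref{ker}, which is exactly what the paper does here as well, so the overall approach is the same. The one place where you diverge organizationally is in how the symplectic constraint enters. You propose to package it up front into a symplectic analogue of Proposition~\ref{centraliser}, i.e.\ a decomposition $K^y\cong U'\rtimes\prod_h\Sp(W_h)$, and then read off $\dim U'$; the paper instead stays inside the $\GL(W)$ bookkeeping of Lemma~\ref{ker} and treats the symplectic equations last, observing that for elements of $\ker\widetilde\Psi$ (which have $1$s on the diagonal) the quadratic symplectic relations become affine-linear and can be eliminated by substitution one at a time, as illustrated in Example~\ref{spsmall}. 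That observation is the actual technical content of the proof, and it is the point your sketch does not explicitly address: if you compute $\dim K^y$ abstractly you run into the usual formula with a correction term counting odd Jordan blocks, and you would then have to show this parity-dependent term cancels against the count of relations from $gw=w$; if instead you read $\dim U'$ off the $b_{ij}^{rs}$ coefficients directly, you are forced to make the same triangular-substitution argument the paper makes. Either way works, but the substitution step needs to be spelled out rather than absorbed into an unproved ``symplectic Proposition~\ref{centraliser}.'' You are right that the surjectivity of $\widetilde\partial_h$ and transitivity of $\Sp(W_h)$ on $W_h\setminus\{0\}$ give $\dim\ker\widetilde\partial_h=2n_{l_h}-1$, and you are right to flag the independence of the $gw=w$ relations as the delicate point; the block-diagonal basis reordering you mention at the end is indeed how the paper makes both issues transparent.
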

\begin{proof}The key difference between this and Lemma \ref{ker} is treating the symplectic condition. We note that the presence of the 1s on the diagonal will mean that imposing the condition that the columns form a symplectic basis will just simply reduce the number of free variables by substitution. See Example \ref{spsmall}.
\end{proof}
\begin{eg}\label{spsmall} Let $\lambda=(2,1)$ and $\mu=(1,1)$. Elements $z\in K^{(w,y)}$ have the form
$$z=\left(\begin{array}{cc|c|c|cc}
a&t_{1}&t_{5}&t_{7}&b&t_{9}\\
0&a&0&0&0&b\\\hline
0&t_{2}&1&\beta&0&t_{10}\\\hline
0&t_{3}&0&1&0&t_{11}\\\hline
c&t_{4}&t_{6}&t_{8}&d&t_{12}\\
0&c&0&0&0&d\end{array}\right),\widetilde{\Psi}(z)=\left(\begin{pmatrix}a&b\\c&d\end{pmatrix},\begin{pmatrix}1\end{pmatrix}\right).$$
where stabilising $w=(1,0,1,0,0,0)^t$ means $a+t_5=1$, $c+t_6=0$, while the symplectic conditions, which determine the variables $t_2$, $t_3$, $t_{10}$, $t_{11}$ and either $t_4$ or $t_{12}$ depending on whether $a$ or $b$ is non-zero.
Elements of $\ker\widetilde{\Psi}$ have the form
$$\left(\begin{array}{cc|c|c|cc}
1&t_{1}&t_5&t_{7}&0&t_{9}\\
0&1&0&0&0&0\\\hline
0&t_{2}&1&\beta&0&t_{10}\\\hline
0&t_{3}&0&1&0&t_{11}\\\hline
0&t_{4}&t_{6}&t_{8}&1&t_{12}\\
0&0&0&0&0&1\end{array}\right), \begin{array}{cl}
t_{1}&=-t_{12}+t_{3}t_{10}-t_{2}t_{11}\\
t_{2}&=-t_8\\
t_{3}&=t_6\\
t_{5}&=t_6=0\\
t_{7}&=t_{10}.
\end{array}$$
So $\ker\widetilde{\Psi}\cong\mathbb{A}^{6}$ as varieties.
\end{eg}

\begin{defn}Let $\widetilde{H}$ be the subgroup of $K^{(w,y)}$ defined by the following relations. For each $r\in I_h$, let $b_{i\lambda_i}^{rj}=0$ if $j\neq\lambda_r$ and 
\begin{itemize}\item[]
\item\underline{$h\in R_t$, $j_h\neq0$.}
$$b_{i\lambda_i}^{r\lambda_r}=\begin{cases}\delta_{ri(h)}-b_{i(h)\lambda_r}^{r\lambda_r}&\text{if $i=i(h+t)$}\\
																						0&\text{otherwise, unless $i\in I_h$.}\end{cases}$$
\item\underline{$j_{h}\neq j_{h+1}$, $h\in L_t$.}
$$b_{i\lambda_i}^{r\lambda_r}=\begin{cases}\delta_{ri(h)}-b_{i(h)\lambda_r}^{r\lambda_r}&\text{if $i=i(h-t)$}\\
													  								0&\text{otherwise, unless $i\in I_h$.}\end{cases}$$	
\item\underline{$h\in J$.}
$$b_{i\lambda_i}^{r\lambda_r}=\begin{cases}1&\text{if $r=i=i(h)$ or $j(h)$}\\
																					 s_{i\lambda_i}^{r\lambda_r}&\text{if $i\notin I_h$, $r=j(h)$}\\
																					 0&\text{otherwise, unless $i,r\in I_h\setminus\{i(h),j(h)\}$.}\end{cases}$$
\item\underline{$j_h=0$.}\quad $b_{i\lambda_i}^{r\lambda_r}=0$ unless $i\in I_h$.
\end{itemize}
where the $s_{i\lambda_i}^{r\lambda_r}$ are determined by the symplectic conditions.								
\end{defn}
\begin{prop}\label{spsub}Restricting $\widetilde{\Psi}$ to $\widetilde{H}$ gives
$$\widetilde{H}\cong\left(\prod_{h\in J}\Sp_{2n_{l_h}-2}(k)\right)\times\left(\prod_{h\notin J}\Sp_{2n_{l_h}}(k)\right).$$
\end{prop}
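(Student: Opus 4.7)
The plan is to mirror the proof of Proposition \ref{sub}, adapted to carry the symplectic constraint. First I would check that $\widetilde{H}$, as defined, really is a subgroup of $K^{(w,y)}$. The relations listed in Definition of $\widetilde{H}$ force commutation with $y$ through Lemma \ref{c}, stabilisation of $w$ through the $\delta_{r i(h)} - b_{i(h)\lambda_r}^{r\lambda_r}$ entries (exactly as in $H$), and the remaining free entries $s_{i\lambda_i}^{r\lambda_r}$ attached to the column $r = j(h)$ are fixed by the symplectic equations $\langle g w_{ij}, g w_{rs}\rangle = \langle w_{ij}, w_{rs}\rangle$. The key observation is that $w_{j(h)\lambda_{j(h)}}$ is paired by $\langle\,,\,\rangle$ with $w_{i(h)1}$ alone (in the relevant block), so specifying the value of the other entries in the matrix leaves a triangular system for $s_{i\lambda_i}^{j(h)\lambda_{j(h)}}$ that can be solved uniquely; in the toy calculation of Example \ref{spsmall} this is exactly how $t_{10},t_{11},\ldots$ are recovered from $t_2,t_3,\ldots$.

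Next I would reorder the basis $\{w_{ij}\}$ in the same column-first way indicated in Proposition \ref{sub}, but now keeping track of the pairing $i\mapsto\bar i$ induced by $W=V\oplus V^*$. Under this reordering, each element of $\widetilde{H}$ becomes block diagonal, with one block for each $h$; for $h\notin J$ the block has size $2n_{l_h}$ and for $h\in J$ the block has size $2n_{l_h}$ but the two rows and columns indexed by $i(h),j(h)$ are $(0,\ldots,0,1,0,\ldots,0)$, so the non-trivial sub-block has size $2n_{l_h}-2$. Combined with Lemma \ref{spinvert}, which says the $\{w_{i\lambda_i}\}_{i\in I_h}$ block is automatically symplectic, the blocks lie in $\Sp_{2n_{l_h}}(k)$ or $\Sp_{2n_{l_h}-2}(k)$ respectively. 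From the block form it is immediate that $\widetilde{H}$ is closed under multiplication and that $\widetilde{\Psi}|_{\widetilde{H}}$ is a group homomorphism onto the stated product.

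Finally, surjectivity follows because given any element of the target product, one can lift each symplectic block to the prescribed block of $\widetilde{H}$ and then read off the remaining $s$-entries from the symplectic equations; injectivity follows because the stabilisation relations and symplectic equations uniquely determine every nonzero entry from the block-diagonal image. The main obstacle, and the step I would spend the most care on, is the consistency of the symplectic equations: one must check that for $h\in J$ the extra column indexed by $j(h)$ can always be filled in so that the whole matrix is symplectic and still centralises $y$, and that no compatibility condition between different $h$'s is imposed. This reduces, after the basis reordering, to an independent check within each block, where it follows from the standard fact that the stabiliser in $\Sp(W_h)$ of an isotropic vector $w_h$ projects onto $\Sp(w_h^\perp/kw_h)$ with a section given by the upper-triangular extension.
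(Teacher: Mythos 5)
Your proposal follows essentially the same route as the paper's proof: define $\widetilde{H}$ as a zero-heavy form of $K^{(w,y)}$-element, reorder the basis to expose the block structure, and read off the isomorphism with the product of symplectic groups, with the $s$-entries solved from the symplectic equations. That is correct in substance. One imprecision worth flagging: after the reordering the element of $\widetilde{H}$ is \emph{not} block diagonal but block upper-triangular. Looking at Example~\ref{note2}, the $x_i$'s sit in the off-diagonal column $i(h)$-block and the $s_i$'s sit in the off-diagonal row $j(h)$-block, so there is cross-block leakage whose entries are determined (by stabilising $w$ and by the symplectic equations, respectively) but nonzero. Similarly, your claim that the rows and columns indexed by $i(h)$, $j(h)$ are elementary is only true within each $h$-block; in the full matrix row $j(h)$ carries the $s$'s and column $i(h)$ carries the $x$'s. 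These slips do not affect the conclusion, since $\widetilde{\Psi}$ only sees the diagonal blocks and the off-diagonal entries are uniquely determined from them, but they are exactly the point of the paper's remark that one must keep extra entries nonzero in order to land inside the symplectic group. Your appeal to the standard projection $\Sp(W_h)^{w_h}\twoheadrightarrow\Sp(w_h^\perp/kw_h)$ with its upper-triangular section is a cleaner justification for solvability of the $s$'s than the paper's terse ``determinant 1'' remark, and is a legitimate alternative way to close the argument.
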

\begin{proof}The idea of proof is very similar to that of Proposition \ref{sub} with the key difference being to keep extra entries non-zero to also make elements in $\widetilde{H}$ symplectic. Using free variables this way is possible essentially because symplectic matrices have determinant $1$. The procedure for reordering the basis is as follows. First relabel the basis $\{w_{ij}\}$ so that it is a Jordan basis for $y$, with the same relative order of the Jordan blocks of each size as before, and then reorder this newly labelled basis as in the proof of Proposition \ref{sub}. See Example \ref{note2}. 
\end{proof}
\begin{eg}\label{note2}Let $\lambda=(2,2,1,1)$ and $\mu=(1,1,1,1)$. Then $H$ consists of elements of the form
$$\left(\begin{array}{cc|cc|c|c|c|c|cc|cc}
a_{11}&0&a_{12}&0&x_{1}&0  &0&0&a_{13}&0&a_{14}&0\\
0&a_{11}&0&a_{12}&0&0      &0&0&0&a_{13}&0&a_{14}\\ \hline
a_{21}&0&a_{22}&0&x_{2}&0&0&0&a_{23}&0&a_{24}&0\\
0&a_{21}&0&a_{22}&0&0      &0&0&0&a_{23}&0&a_{24}\\\hline
0&0&0&0&1&0&0&0&0&0&0&0\\\hline
0&0&0&0&0&\alpha_{22}&\alpha_{23}&0&0&0&0&0\\\hline
0&0&0&0&0&\alpha_{32}&\alpha_{33}&0&0&0&0&0\\\hline
0&s_{1}&0&s_2&0&0&0&1&0&s_{3}&0&s_{4}\\\hline
a_{31}&0&a_{32}&0&x_{3} &0   &0&0&a_{33}&0&a_{34}&0\\
0&a_{31}&0&a_{32}&0&0      &0&0&0&a_{33}&0&a_{34}\\\hline
a_{41}&0&a_{42}&0&x_{4}&0 &0&0&a_{43}&0&a_{44}&0\\
0&a_{41}&0&a_{42}&0&0      &0&0&0&a_{43}&0&a_{44}\end{array}\right)$$
where the $x_i$'s are to stabilise $w$ and the $s_i$'s make it symplectic:
$$\begin{array}{llll}
x_{1}=1-a_{11}&x_{3}=-a_{31}&s_{1}=-a_{41}&s_{3}=-a_{43}\\
x_{2}=-a_{21}&x_{4}=-a_{41}&s_{2}=-a_{42}&s_{4}=1-a_{44}.
\end{array}$$
Change the basis as in the proof of Proposition \ref{spsub} to get
$$\left(\begin{array}{cccc|c|cc|c|cccc}
a_{11}&a_{12}&a_{13}&a_{14}&x_{1}&0  &0&0&0&0&0&0\\
a_{21}&a_{22}&a_{23}&a_{24}&x_{2}&0      &0&0&0&0&0&0\\ 
a_{31}&a_{32}&a_{33}&a_{34}&x_{3}&0&0&0&0&0&0&0\\
a_{41}&a_{42}&a_{43}&a_{44}&x_{4}&0      &0&0&0&0&0&0\\\hline
0&0&0&0&1&0&0&0&0&0&0&0\\\hline
0&0&0&0&0&\alpha_{22}&\alpha_{23}&0&0&0&0&0\\
0&0&0&0&0&\alpha_{32}&\alpha_{33}&0&0&0&0&0\\\hline
0&0&0&0&0&0&0&1&s_1&s_{2}&s_3&s_{4}\\\hline
0&0&0&0&0&0      &0&0&a_{11}&a_{12}&a_{13}&a_{14}\\
0&0&0&0&0&0      &0&0&a_{21}&a_{22}&a_{23}&a_{24}\\
0&0&0&0&0&0      &0&0&a_{31}&a_{32}&a_{33}&a_{34}\\
0&0&0&0&0&0      &0&0&a_{41}&a_{42}&a_{43}&a_{44}\end{array}\right)$$
which is isomorphic to $\Sp_{4}(k)\times\Sp_{4-2}(k).$
\end{eg}
\begin{thm}\label{big2}Suppose $(w,y)\in\Ok$, then 
$$K^{(w,y)}\cong U\rtimes\left(\left(\prod_{h\in J}\Sp_{2n_{l_h}-2}(k)\right)\times\left(\prod_{h\notin J}\Sp_{2n_{l_h}}(k)\right)\right)$$
where $U$ is unipotent and isomorphic to affine space of dimension
$$n+2b(\mu;\nu)-\sum_{h\in J}[2(n_{l_h}-1)^2+(n_{l_h}-1)]-\sum_{h\notin J}(2n_{l_h}^2+n_{l_h}).$$
\end{thm}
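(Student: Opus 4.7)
The plan is to assemble the semi-direct product from the kernel-and-section data supplied by Lemma \ref{spker} and Proposition \ref{spsub}, in exact parallel with the $\GL$ argument of Theorem \ref{big1}. Concretely, I would view
$$\widetilde{\Psi}:K^{(w,y)}\to\left(\prod_{h\in J}\Sp_{2n_{l_h}-2}(k)\right)\times\left(\prod_{h\notin J}\Sp_{2n_{l_h}}(k)\right)$$
as a short exact sequence of algebraic groups whose kernel I will show is an affine-space unipotent group and which I will show splits.

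First I would establish surjectivity of $\widetilde{\Psi}$: Proposition \ref{spsub} already produces a subgroup $\widetilde{H}\subset K^{(w,y)}$ whose restricted image under $\widetilde{\Psi}$ is the entire product of symplectic groups, so $\widetilde{\Psi}$ is surjective and $\widetilde{H}$ is a set-theoretic section. The same proposition also gives $\widetilde{H}\cap\ker\widetilde{\Psi}=\{1\}$, because $\widetilde{\Psi}|_{\widetilde{H}}$ is an isomorphism onto its image. Combined with the automatic normality of $\ker\widetilde{\Psi}$, this is exactly the data for an internal semi-direct product, producing $K^{(w,y)}\cong\ker\widetilde{\Psi}\rtimes\widetilde{H}$. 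I would then set $U:=\ker\widetilde{\Psi}$; Lemma \ref{spker} gives its variety structure and dimension immediately, and to confirm that $U$ is unipotent I would observe that after reordering the basis as in Example \ref{note2} the condition $\widetilde{\Psi}(g)=1$ forces the relevant block-diagonal submatrices of $g$ to be identity matrices, which together with the upper-triangularity forced by Lemma \ref{c}(ii)--(iii) makes every element of $U$ unipotent.

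The main obstacle is the splitting step itself, and it is already absorbed into Proposition \ref{spsub}: one must simultaneously satisfy three constraints on $\widetilde{H}$ — closure under the group law, stabilisation of $w$, and the symplectic relations — while still surjecting onto the full symplectic image. The symplectic condition is what distinguishes this from the $\GL$ case; the idea is to use the auxiliary entries $s_{i\lambda_i}^{r\lambda_r}$ in the definition of $\widetilde{H}$ to enforce symplecticity without altering the image under $\widetilde{\Psi}$, exploiting that symplectic matrices have determinant $1$ so no extra diagonal corrections are needed. Once Proposition \ref{spsub} is granted, everything else is formal.
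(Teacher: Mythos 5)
Your proposal is correct and matches the paper's own (very terse) proof, which simply cites Lemma \ref{spker} for the affine unipotent kernel and Proposition \ref{spsub} for the splitting subgroup $\widetilde{H}$. You have merely spelled out the standard kernel-plus-section argument implicit in that citation, including the unipotence check via Lemma \ref{c}, so nothing genuinely new or missing here.
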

\begin{proof}This follows from Lemma \ref{spker} and Proposition \ref{spsub}.
\end{proof}

Now suppose $\F_q$ is a finite subfield of $k$, and $(w,y)$ is an $\F_q$-rational point. The stabiliser $K^{(w,y)}$ is defined over $\F_q$ and we have:
\begin{cor}\label{spsize}
$$|K^{(w,y)}(\F_q)|=q^{n+2b(\mu;\nu)}\prod_{h\in J}\varphi_{n_{l_h}-1}(q^{-2})\prod_{h\notin J}\varphi_{n_{l_h}}(q^{-2})$$
where $\varphi_m(t)=\prod_{r=1}^{m}(1-t^r).$ 
\end{cor}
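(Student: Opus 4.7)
The plan is to mimic the proof of Corollary \ref{size} essentially verbatim, now drawing on Theorem \ref{big2} in place of Theorem \ref{big1}. First I would invoke the semi-direct product decomposition $K^{(w,y)}\cong U\rtimes\bigl(\prod_{h\in J}\Sp_{2n_{l_h}-2}(k)\times\prod_{h\notin J}\Sp_{2n_{l_h}}(k)\bigr)$, which is defined over $\F_q$ because both $\widetilde\Psi$ and the explicit section $\widetilde H$ are given by polynomial relations with coefficients in the prime subfield. Since $U$ is $\F_q$-isomorphic to affine space, $|U(\F_q)|=q^d$ where
$$d=n+2b(\mu;\nu)-\sum_{h\in J}\bigl[2(n_{l_h}-1)^2+(n_{l_h}-1)\bigr]-\sum_{h\notin J}\bigl(2n_{l_h}^2+n_{l_h}\bigr).$$

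Next I would recall (or record as a remark) the order formula for the finite symplectic group,
$$|\Sp_{2m}(\F_q)|=q^{m^2}\prod_{i=1}^{m}(q^{2i}-1)=q^{2m^2+m}\,\varphi_m(q^{-2}),$$
and multiply: the exponent of $q$ contributed by the symplectic factors is exactly $\sum_{h\in J}[2(n_{l_h}-1)^2+(n_{l_h}-1)]+\sum_{h\notin J}(2n_{l_h}^2+n_{l_h})$, which cancels against the corresponding negative terms in $d$, leaving the prefactor $q^{n+2b(\mu;\nu)}$. The $\varphi$-factors then assemble directly into the claimed product.

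There is essentially no obstacle: the only thing to be slightly careful about is making sure that the semi-direct product splits as $\F_q$-varieties (not merely over $k$), which is immediate from the explicit description of $\widetilde H$ in Definition preceding Proposition \ref{spsub}, since its defining relations involve only rational scalars and the coordinates of the fixed $\F_q$-point $(w,y)$. Once this is noted, the corollary is a one-line counting computation.
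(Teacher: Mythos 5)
Your proposal matches the paper's proof exactly in substance: the paper also invokes Theorem \ref{big2} together with the formula $|\Sp_{2n}(\F_q)|=q^{n+2n^2}\varphi_n(q^{-2})$ and treats the unipotent part as affine space, just as you do. Your extra remarks on $\F_q$-rationality of the splitting are a reasonable expansion of what the paper leaves implicit, but the underlying argument is the same.
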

\begin{proof}As $|\Sp_{2n}(\F_q)|=q^{n+2n^2}\varphi_n(q^{-2})$, the result is immediate.
\end{proof}
\begin{cor}\label{spfq}
$$|\mathbb{O}_{\mu;\nu}(\F_q)|=\frac{q^{2n^2-2b(\mu;\nu)}\varphi_n(q^{-2})}{\prod_{h\in J}\varphi_{n_{l_h}-1}(q^{-2})\prod_{h\notin J}\varphi_{n_{l_h}}(q^{-2})}.$$
\end{cor}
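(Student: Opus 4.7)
The plan is to combine the orbit--stabiliser theorem with Corollary \ref{spsize} and the known order of the finite symplectic group. This mirrors exactly the proof of Corollary \ref{fqpoints}, only with $\Sp_{2n}(\F_q)$ replacing $\GL_n(\F_q)$.

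The first step is to argue that $\mathbb{O}_{\mu;\nu}(\F_q)$ is a single orbit of $K(\F_q)=\Sp_{2n}(\F_q)$. By Theorem \ref{big2}, $K^{(w,y)}$ is isomorphic to a semi-direct product of a connected unipotent group $U$ (affine space) with a product of symplectic groups $\prod_{h\in J}\Sp_{2n_{l_h}-2}(k)\times\prod_{h\notin J}\Sp_{2n_{l_h}}(k)$; since each factor is connected, $K^{(w,y)}$ is connected. A standard Lang--Steinberg argument then shows that for any $\F_q$-rational point $(w,y)$ the set of $\F_q$-points in its $K$-orbit forms a single $K(\F_q)$-orbit.

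The second step is the numerical computation. Using the identity $|\Sp_{2n}(\F_q)|=q^{n+2n^2}\varphi_n(q^{-2})$ and Corollary \ref{spsize}, the orbit--stabiliser theorem gives
\begin{align*}
|\mathbb{O}_{\mu;\nu}(\F_q)|&=\frac{|K(\F_q)|}{|K^{(w,y)}(\F_q)|}\\
&=\frac{q^{n+2n^2}\varphi_n(q^{-2})}{q^{n+2b(\mu;\nu)}\prod_{h\in J}\varphi_{n_{l_h}-1}(q^{-2})\prod_{h\notin J}\varphi_{n_{l_h}}(q^{-2})},
\end{align*}
and the $q^n$ factors cancel to yield the stated formula.

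There is essentially no obstacle: all of the real work has already been done in Theorem \ref{big2} and Corollary \ref{spsize}. The only non-mechanical point is invoking connectedness of the stabiliser to reduce the count of $\F_q$-points of the orbit to a single stabiliser computation, and this is immediate from the explicit semi-direct product decomposition.
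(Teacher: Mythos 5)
Your proposal matches the paper's proof exactly: the paper cites the connectedness of $K^{(w,y)}$ (from Theorem \ref{big2}) to conclude $\mathbb{O}_{\mu;\nu}(\F_q)$ is a single $K(\F_q)$-orbit, then divides $|K(\F_q)|$ by the stabiliser order from Corollary \ref{spsize}, just as you do. You have merely written out the Lang--Steinberg step and the cancellation that the paper leaves implicit by saying ``identical to Corollary \ref{fqpoints}.''
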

\begin{proof}Identical to Corollary \ref{fqpoints}.
\end{proof}

\begin{cor}\label{fini}$|\mathbb{O}_{\mu;\nu}(\F_q)|=|\mathcal{O}_{\mu;\nu}(\F_{q^2})|.$\qed 
\end{cor}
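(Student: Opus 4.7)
The plan is purely computational: compare the two formulas supplied by Corollary \ref{fqpoints} and Corollary \ref{spfq} and show that one is obtained from the other by the substitution $q \mapsto q^2$. There is essentially no obstacle here beyond bookkeeping, since both expressions have been put in the same normal form (a power of $q$ times a quotient of $\varphi$-products indexed by the same set $J$ and the same multiplicities $n_{l_h}$).

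First I would write down Corollary \ref{fqpoints} with $q$ replaced by $q^2$:
\[
|\mathcal{O}_{\mu;\nu}(\F_{q^2})|
=\frac{(q^2)^{\,n^2-b(\mu;\nu)}\,\varphi_n\bigl((q^2)^{-1}\bigr)}
      {\prod_{h\in J}\varphi_{n_{l_h}-1}\bigl((q^2)^{-1}\bigr)\,\prod_{h\notin J}\varphi_{n_{l_h}}\bigl((q^2)^{-1}\bigr)}.
\]
Next I would simplify the exponent, noting $(q^2)^{n^2-b(\mu;\nu)}=q^{2n^2-2b(\mu;\nu)}$, and observe that $(q^2)^{-1}=q^{-2}$, so each $\varphi_m\bigl((q^2)^{-1}\bigr)=\varphi_m(q^{-2})$.

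After these substitutions the right-hand side becomes
\[
\frac{q^{2n^2-2b(\mu;\nu)}\,\varphi_n(q^{-2})}
     {\prod_{h\in J}\varphi_{n_{l_h}-1}(q^{-2})\,\prod_{h\notin J}\varphi_{n_{l_h}}(q^{-2})},
\]
which is literally the formula for $|\mathbb{O}_{\mu;\nu}(\F_q)|$ given by Corollary \ref{spfq}. The equality claimed by Corollary \ref{fini} follows.

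The only thing worth double-checking is that the combinatorial data $(J,\{n_{l_h}\},b(\mu;\nu),n)$ attached to $(\mu;\nu)\in\Qn$ is genuinely the same object on both sides — i.e.\ that the notation in Section 2 and Section 3 refers to the same bipartition invariants. This is guaranteed by the parametrisation in Theorem \ref{oo} and by the fact that the redefinition of $I_h$ at the start of Section 3 does not alter $n_{l_h}=\#\{i\le l(\lambda)\mid \lambda_i=l_h\}$ as used in $J$, $R_t$, $L_t$; so the indexing matches term by term. With that verification made, the corollary is immediate.
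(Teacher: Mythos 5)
Your proposal is correct and matches the paper's intent exactly: the paper marks Corollary \ref{fini} with a bare \qed precisely because it is the immediate substitution $q\mapsto q^2$ in Corollary \ref{fqpoints} compared against Corollary \ref{spfq}, which is what you carry out. The extra remark about the invariants $(J,\{n_{l_h}\},b(\mu;\nu),n)$ being the same on both sides is a sensible sanity check but is already built into the shared parametrisation by $\Qn$.
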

\ \\

 \address
\email
 \end{document}